\documentclass[]{aspm}


\articleinfo{}{}{}


\setcounter{page}{1}


\usepackage{verbatim}
\usepackage{amssymb}
\usepackage{amsbsy}
\usepackage{amscd}
\usepackage{amsmath}
\usepackage{amsthm}
\usepackage[mathscr]{eucal}

\newtheorem{theorem}{Theorem}[section]
\newtheorem{lemma}[theorem]{Lemma}
\newtheorem{corollary}[theorem]{Corollary}

\theoremstyle{definition}

\newtheorem{example}[theorem]{Example}

\theoremstyle{remark}
\newtheorem{remark}[theorem]{Remark}


\newcommand{\abs}[1]{\lvert#1\rvert}
\def\norm#1{\left\Vert#1\right\Vert}
\def\RUCB{{\mathrm{RUCB}\,}}

\def\e{{\varepsilon}}
\def\cl{{\mathrm{cl}\,}}

\def\SS{{\mathcal{S}}}
\def\E{{\mathbb E}}
\def\N{{\mathbb N}}
\def\R{{\mathbb R}}
\def\C{{\mathbb C}}
\def\Q{{\mathbb Q}}
\def\s{{\mathbb S}}

\def\I{{\mathbb I}}
\def\T{{\mathbb T}}
\def\Aut{{\mathrm{Aut}\,}}
\def\Lip{{\mathrm{Lip}\,}}
\def\Homeo{{\mathrm{Homeo}\,}}
\def\me{{\mathrm{me}}}


\title[Concentration and actions]{Concentration of measure and whirly actions of Polish groups}


\author[V. Pestov]{Vladimir Pestov}


\address{Department of Mathematics and Statistics, University of Ottawa, 585 King Edward Avenue, Ottawa, Ontario K1N 6N5, Canada}


\email{vpest283@uottawa.ca}


\rcvdate{}
\rvsdate{}


\subjclass[2000]{37A15, 37B05, 43A05}


\keywords{Concentration of measure on high-dimensional structures, Polish groups, greatest ambit, L\'evy groups, invariant measures, whirly actions, groups of measure-preserving transformations}

\begin{document}

\begin{abstract}
A weakly continuous near-action of a Polish group $G$ on a standard Lebesgue measure space $(X,\mu)$ is whirly if for every $A\subseteq X$ of strictly positive measure and every neighbourhood $V$ of identity in $G$ the set $VA$ has full measure. This is a strong version of ergodicity, and locally compact groups never admit whirly actions. On the contrary, every ergodic near-action by a Polish L\'evy group in the sense of Gromov and Milman, such as $U(\ell^2)$, is whirly (Glasner--Tsirelson--Weiss).
We give examples of closed subgroups of the group $\Aut(X,\mu)$ of measure preserving automorphisms of a standard Lebesgue measure space (with the weak topology) whose tautological action on $(X,\mu)$ is whirly, and which are not L\'evy groups, thus answering a question of Glasner and Weiss. 
\end{abstract}

\maketitle

\section{Introduction}
In recent decades methods of asymptotic geometric analysis (the concentration of measure phenomenon) have been put to use to study dynamical properties of Polish group actions.
The story begins with a paper of Gromov and Milman \cite{GrM} who
had noticed the following.

Let $G$ be a Polish topological group
which admits an approximating chain of compact subgroups $K_n$, $n=1,2,3,\ldots$, that is,
\[K_1< K_2< \ldots< K_n<\ldots,\]
and
\[\cl \bigcup_{i=1}^\infty K_n = G.\]
(Such are, among others, the unitary group $U(\ell^2)$ with the strong operator topology and the group $\Aut(X,\mu)$ of measure-preserving automorphisms of a standard Lebesgue measure space with the weak topology.)

Denote by $\mu_n$ the normalized Haar measures on the compact subgroups $K_n$, and let $d$ be a compatible right-invariant metric on $G$. Suppose the family of metric spaces with measure $(K_n,d\vert_{K_n},\mu_n)$ exhibits the phenomenon of concentration of measure (is a L\'evy family): for every sequence $A_n\subseteq K_n$ of Borel subsets whose measures are uniformly bounded away from zero,
\[\varliminf_{n\to\infty}\mu_n(A_n)>0,\]
and for every $\e>0$, one has
\[
\lim_{n\to\infty}\mu_n\left(A_n\right)_{\e}=1.\]
(Here $A_\e$ denotes the $\e$-neihbourhood of $A$.) Then $G$ is called a  {\em L\'evy group.} 

Let us say that a topological group $G$ has the {\em fixed point on compacta property} if every continuous action of $G$ on a compact space has a common fixed point.

\begin{theorem}[Gromov--Milman \cite{GrM}, Glasner \cite{Gl1}]
\label{th:grm}
Every L\'evy group has the fixed point on compacta property. 
\end{theorem}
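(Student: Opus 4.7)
My plan is to produce a $G$-invariant Dirac probability measure on $X$, whose atom is then a common fixed point. Fix $x_0\in X$, and let $\phi_n\colon K_n\to X$ be the orbit map $\phi_n(k)=kx_0$. Push the normalised Haar measure $\mu_n$ forward to obtain $\nu_n=(\phi_n)_*\mu_n$, a probability measure on $X$. Since the space $P(X)$ of Borel probability measures is compact in the weak-$*$ topology, there is $\nu\in P(X)$ with $\nu=\lim_\omega\nu_n$ along some non-principal ultrafilter $\omega$ on $\N$. It will suffice to verify (a) that $\nu$ is $G$-invariant and (b) that $\nu=\delta_{x^*}$ for some $x^*\in X$.

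Part (a) uses only the density of $\bigcup K_n$ in $G$ and joint continuity of the action, not the L\'evy property. Given $g\in G$, $\e>0$ and $f\in C(X)$, joint continuity of $G\times X\to X$ together with compactness of $X$ produces a neighbourhood $U$ of $g$ with $\sup_{x\in X}|f(hx)-f(gx)|<\e$ for $h\in U$; density of $\bigcup K_n$ then supplies some $g_\e\in K_N\cap U$. For every $n\ge N$, $g_\e\in K_n$, so by left-invariance of the Haar measure $\int f(g_\e kx_0)\,d\mu_n(k)=\int f(kx_0)\,d\mu_n(k)$. Letting $\e\to 0$ and then taking the $\omega$-limit in $n$ yields $\int f(gy)\,d\nu(y)=\int f(y)\,d\nu(y)$, proving $g$-invariance of $\nu$ for every $g\in G$.

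Part (b) is the heart of the matter and is where the L\'evy property enters. For $f\in C(X)$, I claim the functions $f_n=f\circ\phi_n\colon K_n\to\R$ have a common modulus of continuity independent of $n$. Indeed, if $d(k,k')<\delta$ then right-invariance of $d$ gives $k(k')^{-1}\in B_\delta(e)$, while joint continuity of the action together with compactness of $X$ yields a neighbourhood $V$ of $e$ with $\rho(uy,y)<\e$ for all $u\in V$ and all $y\in X$; taking $\delta$ small enough that $B_\delta(e)\subseteq V$ and applying this to $u=k(k')^{-1}$, $y=k'x_0$ gives $\rho(kx_0,k'x_0)<\e$, after which composition with the uniform modulus of $f$ on $X$ furnishes the required control of $|f_n(k)-f_n(k')|$. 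The L\'evy property, reformulated for uniformly equicontinuous families, now delivers medians $m_n$ of $f_n$ with $\mu_n\{|f_n-m_n|>\e\}\to 0$ for every $\e>0$; hence the real-valued pushforwards $f_*\nu_n=(f_n)_*\mu_n$ concentrate on $m_n$, and the $\omega$-ultralimit satisfies $f_*\nu=\delta_{m_\infty}$ for some $m_\infty\in\R$. Since $f^{-1}(m_\infty)$ is closed, $\supp\nu\subseteq f^{-1}(m_\infty)$, so any two points of $\supp\nu$ take the same value under every $f\in C(X)$. Continuous functions separate points on the compact Hausdorff space $X$, so $\supp\nu$ is a single point $x^*$.

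Combining (a) and (b), $\delta_{x^*}=\nu$ is $G$-invariant, so $x^*$ is fixed by every element of $G$. The main technical step, and the pivotal use of the L\'evy property, is the uniform-in-$n$ modulus-of-continuity estimate for the family $(f\circ\phi_n)$ in (b); this is exactly what dictates the choice of a right-invariant metric on $G$, and it is the hinge on which concentration of the Haar measures $\mu_n$ on $K_n$ is converted into collapse of the orbital measures $\nu_n$ onto a single point.
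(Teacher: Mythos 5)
Your proof is correct. The underlying mechanism is the same as the paper's --- concentration forces a limit of the Haar measures to be a Dirac mass, while invariance of that limit then yields a fixed point --- but you implement it in a genuinely different venue. The paper proves the more general Theorem \ref{th:fpc}: it works once and for all on the greatest ambit $\SS(G)$, takes a weak$^\ast$ cluster point $\nu$ of the net of measures there, uses density of Lipschitz functions in $\RUCB(G)$ together with Lemma \ref{l:lipschitz} to see that $\nu$ is a point mass, and deduces invariance abstractly from the hypothesis that the net converges weakly to invariance (verified for Haar measures on an approximating chain in a separate example); a fixed point in $\SS(G)$ then pushes down to every compact $G$-space. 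You instead bypass $\SS(G)$ entirely: you push the Haar measures forward along an orbit map into the given compact space $X$, prove invariance of the ultralimit by hand from density of $\bigcup K_n$ and left-invariance of each $\mu_n$, and replace the Lipschitz reduction by the (equivalent, and slightly more flexible) observation that the functions $k\mapsto f(kx_0)$ on $K_n$ share a modulus of continuity with respect to the right-invariant metric, so that concentration collapses each pushforward $f_\ast\nu_n$ onto its median. What the paper's route buys is generality --- Theorem \ref{th:fpc} applies to any Polish group carrying a L\'evy net of measures weakly converging to invariance, with no compact subgroups in sight, and this generality is exactly what the rest of the paper needs --- whereas your route is more elementary and self-contained for the classical L\'evy-group statement, at the cost of being tied to the approximating chain of compact subgroups. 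Two small points to polish: when $X$ is not metrizable you should phrase the equicontinuity step using the unique uniformity of the compact space $X$ (or directly in terms of $\sup_{y\in X}\abs{f(uy)-f(y)}$) rather than a metric $\rho$ on $X$, and in part (a) the quantifiers are cleaner if you fix $\e$, pass to the $\omega$-limit to get $\bigl\vert\int f(gy)\,d\nu-\int f\,d\nu\bigr\vert\leq\e$, and only then let $\e\to 0$.
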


(The result in \cite{GrM} was obtained under an additional restriction on the action, later removed by Glasner \cite{Gl1}.) 
We will give a simple proof, cf. Theorem \ref{th:fpc} below.
Notice that no locally compact group other than $\{e\}$ has the fixed point on compacta property (Granirer and Lau \cite{GraL}). 

The known examples of L\'evy groups include $U(\ell^2)$ \cite{GrM}, the groups $L^0(X,\mu;K)$ of measurable maps from a standard Lebesgue measure space to a compact group equipped with the topology of convergence in measure (Glasner \cite{Gl1}, Furstenberg and Weiss, unpublished), more generally the unitary groups of hyperfinite von Neumann algebras with the ultraweak topology \cite{GP1,GP2}, the group $\Aut(X,\mu)$ and various other groups of measure preserving and nonsingular transformations ({\em ibid.}), etc. Notice that the class of groups with the fixed point on compacta property is wider than the class of L\'evy groups \cite{P98a,KPT}. For an overview of the theory, see \cite{P06}.

The L\'evy property has consequences in the context of ergodic theory as well, as discovered by Glasner, Tsirelson and Weiss \cite{GTW}, see also \cite{GW}. Here are the relevant definitions.
The symbol $(X,\mu)$ in our paper will always denote a standard Lebesgue measure space, that is, a separable Borel space equipped with a non-atomic Borel probability measure. The group $\Aut(X,\mu)$ of measure-preserving automorphisms of $(X,\mu)$ will be always equipped with the {\em weak} ({\em coarse}) topology, that is, the weakest topology making every mapping of the form 
\[\Aut(X,\mu)\ni g\mapsto \mu(A\bigtriangleup gA)\in\R\]
continuous, where $A\subseteq X$ is a Borel subset. 
A (measure-preserving) {\em weakly continuous near-action} of a Polish group $G$ on a measure space $(X,\mu)$ is a continuous homomorphism from $G$ to the group $\Aut(X,\mu)$ equipped with the weak topology. Such a near-action is {\em whirly} if for every non-null measurable subset $A\subseteq X$ and each neighbourhood $V$ of identity in $G$, the set $VA$ has full measure.
(The set $VA$ is defined unambiguously, up to a null set, by selecting a countable dense subset $\{v_n\}_{n=1}^\infty\subseteq V$ and posing $VA = \cup_{n=1}^\infty v_nA$.) Equivalently, a near-action is whirly if for every two sets $A$ and $B$ of strictly positive measure there is a $g\in G$ arbitrarily close to identity and such that $gA\cap B$ has a strictly positive measure.

Clearly, every whirly near-action is ergodic, but the converse is not true: for instance, no locally compact group admits non-trivial whirly near-actions. This is essentially a reformulation of a theorem due to Mackey, Vara\-darajan, and Ramsay (cf. references in \cite{GTW}) stating that every near-action of a locally compact group has a Borel (point, spatial) realization. On the contrary, for infinite-dimensional groups whirly near-actions are a common occurence.

\begin{theorem}[Glasner, Tsirelson and Weiss \cite{GTW}]
Every ergodic measure-preserving weakly continuous near-action of a Polish L\'evy group on a standard Lebesgue space is whirly. 
\label{th:gtwwhirly}
\end{theorem}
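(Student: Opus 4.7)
My strategy is proof by contradiction via the concentration of measure phenomenon on the compact subgroups $K_n$. Assume the ergodic near-action is not whirly: there exist non-null $A,B\subseteq X$ and a symmetric open neighborhood $V$ of $e$ with $\mu(\pi(g)A\cap B)=0$ for $g$ in a countable dense subset of $V$, hence by weak continuity for every $g\in V$. Define the continuous function $\phi(g):=\mu(\pi(g)A\cap B)$; the estimate $|\phi(g)-\phi(h)|\le\mu(\pi(h^{-1}g)A\bigtriangleup A)$ combined with weak continuity of the near-action at $e$ shows that $\phi$ is right-uniformly continuous with respect to the right-invariant metric $d$, and by hypothesis $\phi\equiv 0$ on $V$.

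Next I would show that $\int_{K_n}\phi\,d\mu_n\to\mu(A)\mu(B)>0$. By Fubini this integral equals $\langle\chi_B,E_n\chi_A\rangle_{L^2(\mu)}$, where $E_n$ is the conditional expectation onto the $K_n$-invariants in $L^2(X,\mu)$. Because $\bigcup_n K_n$ is dense in $G$ and the action is ergodic, a mean-ergodic argument applied to the increasing chain $(K_n)$ gives $E_n\chi_A\to\mu(A)$ strongly in $L^2$, from which the mean convergence follows. Combined with the L\'evy property applied to the $d$-uniformly continuous function $\phi$, this yields sharp concentration on $K_n$: for $c:=\mu(A)\mu(B)/2$, the set $W_n:=\{k\in K_n:\phi(k)>c\}$ satisfies $\mu_n(W_n)\to 1$. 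I would then invoke the Gromov--Milman separation inequality (a standard reformulation of the L\'evy property): if $S,T\subseteq K_n$ are $d$-separated by $\delta$, then $\mu_n(S)\cdot\mu_n(T)\le\alpha_n(\delta/2)\to 0$. By uniform continuity of $\phi$ there is a fixed $\delta>0$ with $d(V\cap K_n,W_n)\ge\delta$ for every $n$, since $\phi=0$ on $V$ while $\phi>c$ on $W_n$; the inequality therefore forces $\mu_n(V\cap K_n)\to 0$ at the concentration rate.

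The principal obstacle will be to upgrade the resulting decay $\mu_n(V\cap K_n)\to 0$ into an outright contradiction, since a priori the Haar measure of an open $G$-neighborhood of $e$ restricted to $K_n$ can itself tend to zero (even exponentially, as with small balls in $U(n)$). I expect the resolution to involve either (i) applying the concentration argument to a translate $\phi\circ L_{g_0}$ for some $g_0\in G$ at which $\phi$ is already known to be positive, so that $g_0^{-1}V$ has favorable position relative to the concentrated set $W_n$, or (ii) switching to the right-invariant pseudometric $\rho_A(g,h):=\mu(\pi(g)A\bigtriangleup\pi(h)A)$, with respect to which $\phi$ is genuinely $1$-Lipschitz and $V$ is a genuine $\rho_A$-neighborhood of $e$, and verifying that $(K_n,\rho_A|_{K_n},\mu_n)$ inherits a L\'evy-type concentration from $(K_n,d|_{K_n},\mu_n)$.
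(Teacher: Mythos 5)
Your opening steps are essentially sound: $\phi(g)=\mu(\pi(g)A\cap B)$ vanishes on $V$ by density and continuity, and since the projections $E_n$ onto the $K_n$-invariant vectors decrease to the projection onto the constants (ergodicity plus density of $\bigcup_nK_n$), indeed $\int_{K_n}\phi\,d\mu_n=\langle E_n\chi_A,\chi_B\rangle\to\mu(A)\mu(B)>0$, so concentration forces $\mu_n\{\phi>c\}\to1$. (Even here a repair is needed: your estimate $\abs{\phi(g)-\phi(h)}\le\mu(\pi(h^{-1}g)A\bigtriangleup A)$ controls $\phi$ in terms of $h^{-1}g$, which is \emph{left} uniform continuity, and $\rho_A$ is left-invariant, not right-invariant; since the L\'evy property is formulated for a right-invariant metric, you must transfer concentration via the inversion map, which preserves Haar measure on each $K_n$ and swaps the two uniformities.) But the obstacle you flag at the end is not a loose end --- it is the reason no proof runs this way. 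The conclusion $\mu_n(V\cap K_n)\to0$ carries no contradiction, and neither proposed rescue works. For (ii): apply your own concentration argument to the function $g\mapsto\rho_A(g,e)=\mu(\pi(g)A\bigtriangleup A)$, which is $1$-Lipschitz for $\rho_A$ and whose mean over $K_n$ tends to $2\mu(A)(1-\mu(A))>0$; hence the small $\rho_A$-ball around $e$ \emph{also} has $\mu_n$-measure tending to zero. The identity and all its neighbourhoods, in every one of these metrics, simply live in the exceptional set, and all the information you possess about $\phi$ is localized there. For (i), you know $\phi=0$ only on $V$; translating to a point $g_0$ with $\phi(g_0)>c$ gives no vanishing of $\phi$ on $g_0V$ or $Vg_0$, so there is nothing left to separate.

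The missing ingredient is a global one, and in \cite{GTW} (and in the present paper's generalization) it is obtained by changing the arena. One first proves Theorem \ref{th:gtwfixed}: every invariant measure on a compact $G$-space is supported on fixed points. On a compact $G$-space the orbit maps are genuinely right uniformly continuous, the translates $\,_xf(g)=f(gx)$ of a Lipschitz function are Lipschitz uniformly in $x$ (Lemma \ref{l:lipschitz}), and --- decisively --- the $G$-invariance of the measure allows one to replace the average of $f(gx)$ over $g\in K_n$ by the value at a single well-chosen $g_0$ and then integrate it away (see the proof of Lemma \ref{l:convolution}); your local argument on $(X,\mu)$ has no analogue of this step. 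One then transfers back: an ergodic near-action that is not whirly admits a nontrivial compact metrizable spatial factor (Glasner--Weiss Thm.\ 3.10 together with Becker--Kechris; this is the implication (3)$\Rightarrow$(1) of Theorem \ref{th:equivalent}), and the image measure there is invariant, ergodic and not a point mass --- the contradiction. That passage through the spatial factor theorem is the substantive idea your direct attack on the measure algebra is missing.
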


This result can be equivalently stated as follows.

\begin{theorem}[Glasner, Tsirelson and Weiss \cite{GTW}]
Let $G$ be a Polish L\'evy group and $X$ a compact $G$-space. Then every invariant probability measure on $X$ is supported on a set of fixed points.
\label{th:gtwfixed}
\end{theorem}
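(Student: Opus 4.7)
The plan is to derive Theorem \ref{th:gtwfixed} from Theorem \ref{th:gtwwhirly} by ergodic decomposition. The guiding observation is that a whirly measure-preserving near-action on a space which \emph{also} carries a continuous compact $G$-action has no topological room to ``spread'' a small positive-measure neighborhood to full measure, so its support must collapse to a single fixed point.

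First I would verify that the continuous action of $G$ on the compact space $X$, together with the invariant probability measure $\nu$, gives rise to a measure-preserving weakly continuous near-action on $(X,\nu)$ in the sense of the definition preceding Theorem \ref{th:gtwwhirly}. Weak continuity amounts to $\nu(A\bigtriangleup g_n A)\to 0$ whenever $g_n\to g$ in $G$ and $A$ is Borel; this follows from joint continuity of $G\times X\to X$ by approximating $\mathbf{1}_A$ by continuous functions and applying dominated convergence. Then I would decompose $\nu=\int \nu_t\,d\lambda(t)$ into ergodic $G$-invariant components; since each $(X,\nu_t)$ still carries a measure-preserving weakly continuous ergodic near-action of $G$, Theorem \ref{th:gtwwhirly} says this near-action is whirly. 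The theorem reduces to showing that each $\nu_t$ is a Dirac measure supported at a $G$-fixed point.

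The key step is the following topological-measure argument. Suppose for contradiction that $\supp\nu_t$ contains two distinct points $x_1\neq x_2$, and pick disjoint open neighborhoods $U_1\ni x_1$ and $U_2\ni x_2$ in $X$. By joint continuity of the action $G\times X\to X$, there exist an open neighborhood $W$ of identity in $G$ and an open neighborhood $V\subseteq U_1$ of $x_1$ with $WV\subseteq U_1$. Since $x_1\in\supp\nu_t$, we have $\nu_t(V)>0$, and whirliness gives $\nu_t(WV)=1$; but $WV\subseteq U_1$, contradicting $\nu_t(U_2)>0$ and $U_1\cap U_2=\emptyset$. Hence $\supp\nu_t$ is a singleton, and $G$-invariance of $\nu_t$ forces the supporting point to be fixed. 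Integrating over $t$, $\nu$ is supported on the set of $G$-fixed points.

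The only real subtlety I anticipate is the invocation of ergodic decomposition for an arbitrary Polish group action on a compact (possibly non-metrizable) $G$-space. Since the near-action factors through $\Aut(X,\nu)$ in the weak topology and $(X,\nu)$ is modelled on a standard Lebesgue space, the standard decomposition into ergodic invariant measures applies without modification, and each ergodic component is itself a Borel probability measure on $X$ to which the argument above may be applied.
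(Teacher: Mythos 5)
Your overall route---decompose the invariant measure into ergodic components and apply Theorem \ref{th:gtwwhirly} to each---is the same reduction the paper uses (it is the implication (1)$\Rightarrow$(2) of Theorem \ref{th:equivalent}, read forwards rather than by contraposition). Where you genuinely differ is in the key lemma. The paper deduces ``whirly forces a point mass'' by citing Proposition 3.3 of \cite{GTW}: a whirly near-action admits no nontrivial spatial realization, and the compact space $X$ itself provides one. You instead prove the needed special case directly: if $\supp\nu_t$ contained two points, separate them by $U_1,U_2$, shrink to $W\ni e$ and $V\ni x_1$ with $WV\subseteq U_1$, and contradict whirliness. That argument is correct (for a Radon measure every open set meeting the support is non-null, and $WV\supseteq\bigcup_n w_nV$ must have full measure) and is a more elementary, self-contained substitute for the spatial-realization machinery.

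Two steps, however, do not go through as written. First, Theorem \ref{th:gtwwhirly} is stated for a \emph{standard Lebesgue space}, which in this paper means \emph{non-atomic}; an ergodic component $\nu_t$ may be purely atomic, namely uniform on a finite invariant set of $k\geq 2$ atoms, and for such components you cannot invoke whirliness at all. You need a separate (easy) argument there: the stabilizer of each atom is open, so one gets a continuous homomorphism of $G$ onto a transitive permutation group of the $k$ atoms, which must be trivial because a L\'evy group has the fixed point on compacta property (Theorem \ref{th:grm}) and hence no nontrivial finite continuous quotients; ergodicity then forces $k=1$ and $\nu_t$ is a Dirac mass at a fixed point. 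Second, your last paragraph asserts that $(X,\nu)$ ``is modelled on a standard Lebesgue space'' for an arbitrary compact $G$-space $X$; this fails when $X$ is non-metrizable, since a Radon measure on such a space can have a non-separable measure algebra, and then neither the ergodic decomposition you invoke nor Theorem \ref{th:gtwwhirly} applies. The correct reduction is the one the paper performs in (2)$\Rightarrow$(4) of Theorem \ref{th:equivalent}: for each $f\in C(X)$ pass to the separable invariant $C^{\ast}$-subalgebra it generates, whose maximal ideal space is a metrizable compact $G$-space carrying the push-forward of $\nu$, prove the statement there, and conclude that $\int\abs{f-{}^gf}\,d\nu=0$ for all $f$ and $g$. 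With these two patches your argument is complete.
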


It can be shown that an amenable Polish group satisfying the conclusion of either theorem has the fixed point on compacta property. The converse is not true, as noted in \cite{GTW}. For instance, the group $\Aut(\Q,\leq)$ of order-preserving bijections of the rationals equipped with the topology of simple convergence on $\Q$ with discrete topology has the fixed point on compacta property \cite{P98a}. As noted in \cite{GTW}, this group acts in a measure-preserving way on the Cantor space $\{0,1\}^{\Q}$ equipped with the product measure, which is of course ergodic. 

The authors of \cite{GW} construct examples of non-L\'evy second countable groups $G$ which admit whirly actions on a standard Lebesgue measure space, among them the additive group of $\ell^2$. However, in those examples either the action homomorphism $G\to\Aut(X,\mu)$, while continuous, is not a topological embedding, or else the group $G$ is not Polish. This had motivated the following question \cite{GW}: suppose $G$ is a closed subgroup of $\Aut(X,\mu)$ such that the tautological action of $G$ on $(X,\mu)$ is whirly. Is $G$ necessarily a L\'evy group?

The main aim of this paper is to answer the question in the negative. We show that Theorems \ref{th:gtwwhirly} and \ref{th:gtwfixed} can be extended to a wider class of Polish groups which, while not necessarily L\'evy, retain a sufficient amount of concentration, witnessed by nets of probability measures that at the same time concentrate and converge to invariance. Such groups need not contain any compact subgroups. 
Among them, are the groups $G=L^0(X,\mu;H)$ of measurable maps from a Lebesgue space to an arbitrary amenable second-countable locally compact group $H$. Those groups embed into $\Aut(X,\mu)$ as closed topological subgroups in such a way that the resulting action of $G$ on $(X,\mu)$ is whirly, thus answering the question by Glasner and Weiss. 

\section{Measures on compact $G$-spaces}

Let $G$ be a topological group. 
The {\em right uniform structure} on $G$ has a basis of entourages of the diagonal of the form
\[V_R=\{(x,y)\in G\times G\colon xy^{-1}\in V\},\]
where $V$ runs over a neighbourhood basis at the identity of $G$. For every element $x$ in a compact $G$-space $X$, the orbit map 
\[G\ni g\mapsto gx\in X\]
is right uniformly continuous. 

By $\RUCB(G)$ we will denote the commutative $C^\ast$-algebra of all bounded complex valued right uniformly continuous functions on $G$.

\begin{remark}
\label{r:denses}
In this article, all acting groups $G$ are assumed to be Polish, that is, separable completely metrizable topological groups. Some results of topological dynamics and ergodic theory, usually stated and proved for countable discrete acting groups, can be extended readily to Polish groups by choosing a dense countable subgroup $\tilde G$ of $G$ and then using continuity of the action.
\end{remark}

A compact $G$-space is {\em topologically transitive} if for every pair of non-empty open subsets $U,V\subseteq X$ there is $g\in G$ with $gU\cap V\neq\emptyset$, and {\em point transitive} if there exists a point $x_0\in X$ (said to be a {\em transitive point}) whose orbit is dense in $X$. For metrizable compact $G$-spaces $X$ the two notions coincide. (Cf. \cite{Gl2}, 1.1, p. 14, where it is stated for countable discrete acting groups, but see our remark \ref{r:denses}.) A point transitive space $X$ together with a distinguished transitive point $x_0$ is called a $G${\em -ambit}. 

The maximal ideal space of $\RUCB(G)$ is denoted $\SS(G)$ and called the {\em greatest ambit} of $G$. It contains a topological copy of $G$ as a dense subspace, is equipped with a canonical action of $G$ extending the action of $G$ on itself by left translations, and contains a distinguished element whose orbit is dense (the identity of $G$). Every function $f\in\RUCB(G)$ extends in a unique way to a continuous function on the greatest ambit, and conversely, the restriction of every $f\in C(\SS(G))$ to $G$ is bounded right uniformly continuous. The greatest ambit is non-metrizable unless $G$ is a compact metrizable group, in which case $\SS(G)=G$.

When considering probability measures on compact spaces which are not necessarily metrizalbe, we assume that they are regular Borel. Essentially, we are always talking of Radon probability measures, that is, normalized states on $C(X)$, which by the Riesz representation theorem correspond to regular Borel probability measures on $X$.

We will use the symbol $\Rightarrow$ to denote the weak convergence of measures. For probability measures, weak convergence only depends on the topology of the underlying space (and not on the uniformity), cf. \cite{parthasarathy}, Thm. II.6.1, and so there can be no ambiguity. When considering weak convergence 
of Radon (signed) measures in Sect. \ref{s:invariance}, we take as the test functions uniformly continuous bounded functions on a metric space, and {\em right} uniformly continuous bounded functions on a Polish group. 

If $G$ is a metrizable group, equipped with a compatible right-invariant metric $d$, then the Lipschitz functions are uniformly dense in $\RUCB(G)$ (cf. e.g. \cite{miculescu}, theorem 1, modulo the well-known fact that every $1$-Lipschitz real function defined on a subset of a metric space $X$ extends to a $1$-Lipschitz function on $X$). 
Therefore, $1$-Lipschitz functions, or rather their unique extensions over $\SS(G)$, can serve as test functions for weak convergence of measures on the greatest ambit of $G$. 

\begin{lemma}
Let $G$ be a topological group equipped with a right invariant compatible metric $d$. Suppose $f\in C(\SS(G))$ and $f\vert_G$ is $1$-Lipschitz. Then for every $x\in \SS(G)$ the real function $\,_xf$ on $G$, defined by
\[G\ni g\mapsto \,_xf(g)=f(gx)\in \R,\]
is $1$-Lipschitz as well.
\label{l:lipschitz}
\end{lemma}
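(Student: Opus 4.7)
The plan is a two-step density-and-continuity argument. Step one is to handle the case $x\in G\subseteq\SS(G)$ directly. The canonical action of $G$ on $\SS(G)$ extends its left-regular action on itself, so for $g,h\in G$ and $x\in G$ the products $gx, hx\in \SS(G)$ are simply the group products in $G$. Since $d$ is right-invariant and $f\vert_G$ is $1$-Lipschitz,
$$|f(gx)-f(hx)|\leq d(gx,hx)=d(g,h),$$
giving $\,_xf$ a Lipschitz constant at most $1$.

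Step two is to extend to arbitrary $x\in\SS(G)$ by approximation. Using that $G$ sits as a dense subspace of $\SS(G)$, fix a net $(x_\alpha)\subseteq G$ with $x_\alpha\to x$. For each fixed $g\in G$, continuity of the $G$-action on $\SS(G)$ in the ambit variable gives $gx_\alpha\to gx$, and continuity of $f$ on $\SS(G)$ then yields $f(gx_\alpha)\to f(gx)$; similarly with $h$ in place of $g$. Applying step one to each $x_\alpha\in G$ we have $|f(gx_\alpha)-f(hx_\alpha)|\leq d(g,h)$, and passing to the limit along $\alpha$ produces the required bound $|\,_xf(g)-\,_xf(h)|\leq d(g,h)$.

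I do not anticipate any real obstacle. The argument uses only (i) the density of $G$ in $\SS(G)$, (ii) continuity of the $G$-action on $\SS(G)$ in the second variable for each fixed group element, and (iii) the right-invariance of $d$, all of which are built into the preceding exposition. The mildest bookkeeping point is to confirm that the action of $G$ restricted to the dense copy of $G\subseteq\SS(G)$ is genuinely the left-regular action, which is explicit in the construction of the greatest ambit.
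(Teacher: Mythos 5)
Your proof is correct and follows essentially the same route as the paper's: approximate $x\in\SS(G)$ by a net $(x_\alpha)\subseteq G$, use right-invariance to get $|f(gx_\alpha)-f(hx_\alpha)|\leq d(gx_\alpha,hx_\alpha)=d(g,h)$, and pass to the limit via continuity of the action and of $f$. The only difference is presentational — you isolate the case $x\in G$ as a separate step, whereas the paper runs the whole computation in one chain of (in)equalities.
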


\begin{proof}
Let $x=\lim_{\beta}x_{\beta}$, $x_{\beta}\in G$. Since the action of $G$ on $X$ is continuous, we have for all $g,h\in G$
\begin{eqnarray*}
\abs{\,_xf(g)-\,_xf(h)}&=&\abs{f(gx)-f(hx)}\\
&=& \abs{f(g\lim_{\beta}x_\beta) - f(h\lim_{\beta}x_{\beta})} \\
&=&\lim_{\beta}\abs{f(gx_{\beta}) - f(hx_{\beta})}\\
&\leq& \varlimsup_{\beta} \,d(gx_{\beta},hx_{\beta})\\ 
&=& d(g,h).\end{eqnarray*}
\end{proof}

Let $G$ be a Polish group and let $X$ be a compact $G$-space with the corresponding action
\[\tau\colon G\times X\to X.\]
Let now $\nu$ be a Borel probability measure on $G$, and $\mu$ a regular Borel probability measure on $X$. We define their convolution as the push-forward of the product measure $\nu\times\mu$ along the action $\tau$,
\[\nu\ast\mu = \tau_{\ast}(\nu\times\mu)\equiv (\nu\times\mu)\tau^{-1},\]
or, more explicitely, by the condition
\[\int f\,d(\nu\ast\mu) = \int f(gx)\,d(\nu(g)\times\mu(x))\]
for every $f\in C(X)$. Thus, $\nu\ast\mu$ is a Radon probability measure on $X$.

We are principally interested in the case where $X=(X,x_0)$ is a $G$-ambit, in particular $X=\SS(G)$ is the greatest ambit. In the latter case, if $\mu$ is supported on $G$, the convolution $\nu\ast\mu$ is the usual convolution of two probability measures on a metric group $G$, cf. \cite{parthasarathy}, Ch. III.

\section{On the concept of a whirly group}

The following observation is largely contained in \cite{GTW,GW}.

\begin{theorem}
\label{th:equivalent}
For a Polish group $G$, consider the following assertions:
\begin{enumerate}
\item Every ergodic weakly continuous near-action of $G$ on a standard Lebesgue measure space is whirly.
\item Every invariant Borel probability measure on every metrizable compact $G$-space is supported on the set of fixed points. 
\item Same, for every point-transitive metrizable $G$-space $X$.
\item Every invariant regular Borel probability measure on the greatest ambit $\SS(G)$ is supported on the set of fixed points. 
\end{enumerate}
Then
\[(1)\iff (2)\iff(3)\Rightarrow (4),\]
and if in addition $G$ is amenable, then
\[(1)\iff(2)\iff(3)\iff(4).\]
\end{theorem}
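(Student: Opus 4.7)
\textit{Proof plan.}
The equivalence $(1)\iff(2)$ I would treat as essentially the Glasner--Tsirelson--Weiss restatement already recorded in the passage from Theorem~\ref{th:gtwwhirly} to Theorem~\ref{th:gtwfixed}: to pass from a measure-preserving near-action on $(X,\mu)$ to a compact metric $G$-space with invariant measure, I would take the Gelfand spectrum of a separable $G$-invariant unital $C^{*}$-subalgebra of $L^{\infty}(X,\mu)$ whose classes separate points of the measure algebra, and unwind definitions to see that whirliness of the original near-action corresponds precisely to the pushed-forward measure being concentrated on the set of $G$-fixed points of the spectrum. The direction $(2)\Rightarrow(3)$ is immediate, so the real work lies in $(3)\Rightarrow(2)$, $(3)\Rightarrow(4)$, and, under amenability, $(4)\Rightarrow(3)$.

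For $(3)\Rightarrow(2)$, the plan is to fix a countable dense subgroup $\tilde G\le G$ (Remark~\ref{r:denses}) and apply the classical ergodic decomposition of a $\tilde G$-invariant Borel probability measure on a compact metric space. By density plus continuity of the action, $\tilde G$-invariance coincides with $G$-invariance and $\tilde G$-orbit closures coincide with $G$-orbit closures, so each ergodic component is supported on a single point-transitive $G$-space $\overline{Gx}$; applying (3) to that orbit closure and integrating back over the decomposition yields~(2).

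The implication $(3)\Rightarrow(4)$ is the principal new content, and I will argue by contradiction. Suppose $\mu$ is invariant on $\SS(G)$ with $\mu(\mathrm{Fix}\,\SS(G))<1$. Since the fixed-point set is closed (a countable, over $\tilde G$, intersection of equalizers of continuous maps), I can choose $y_{0}\in\supp\mu$ outside it and, by continuity of the action, an element $g\in\tilde G$ with $gy_{0}\ne y_{0}$; then pick $f\in C(\SS(G))$ separating $y_{0}$ from $gy_{0}$. The key construction is the unital $C^{*}$-subalgebra $A\subseteq C(\SS(G))$ generated by the countable family $\{f\circ L_{h}:h\in\tilde G\}$: it is separable and $\tilde G$-invariant, and because every element of $\RUCB(G)=C(\SS(G))$ satisfies $\|a\circ L_{h}-a\|_{\infty}\to 0$ as $h\to e$, a density-plus-closedness argument upgrades this to $G$-invariance of $A$ with strongly continuous $G$-action. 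Gelfand duality then produces a continuous $G$-equivariant surjection $\pi\colon\SS(G)\to X$ onto a metric compact $G$-space, and $(X,\pi(e))$ is a point-transitive $G$-ambit. Applying (3) to the invariant measure $\pi_{*}\mu$ on $X$ forces it onto $\mathrm{Fix}\,X$; but, by the choice of $f$, we have $\pi(gy_{0})=g\pi(y_{0})\ne\pi(y_{0})$, while $\pi(y_{0})\in\supp\pi_{*}\mu$, a contradiction.

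For $(4)\Rightarrow(3)$ under amenability, I would take a metric $G$-ambit $(X,x_{0})$ with invariant probability measure $\mu$ and use the canonical continuous $G$-equivariant surjection $\pi\colon\SS(G)\to X$. The fibre $M=\pi_{*}^{-1}(\mu)\subseteq P(\SS(G))$ is non-empty (the pushforward under a continuous surjection is surjective), convex, weak$^{*}$-compact, and $G$-invariant, and $G$ acts on it by affine weak$^{*}$-continuous maps; amenability of $G$ plus the Day / Markov--Kakutani fixed-point theorem then yields a $G$-invariant $\tilde\mu\in M$. By~(4), $\tilde\mu$ is supported on $\mathrm{Fix}\,\SS(G)$, so $\mu=\pi_{*}\tilde\mu$ is supported on $\pi(\mathrm{Fix}\,\SS(G))\subseteq\mathrm{Fix}\,X$, giving~(3). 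I expect the main technical obstacle to sit inside $(3)\Rightarrow(4)$, namely the verification that a $\tilde G$-invariant separable $C^{*}$-subalgebra of $C(\SS(G))$ is automatically $G$-invariant with strongly continuous $G$-action on its spectrum; this is exactly where the right uniform continuity encoded in $\RUCB(G)=C(\SS(G))$ is crucial.
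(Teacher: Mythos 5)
Your handling of $(2)\Rightarrow(3)$, $(3)\Rightarrow(2)$, $(3)\Rightarrow(4)$ and, under amenability, $(4)\Rightarrow(3)$ is essentially correct and close to the paper's own argument: the paper proves $(2)\Rightarrow(4)$ by exactly your device (the separable invariant $C^*$-subalgebra of $\RUCB(G)$ generated by the translates of $f$, with right uniform continuity supplying the norm-continuity of left translation), and $(4)\Rightarrow(3)$ by the same compact convex fibre argument. Your direct $(3)\Rightarrow(2)$ via ergodic decomposition onto supports of ergodic components is a harmless variant of the paper's route $(3)\Rightarrow(1)\Rightarrow(2)$.

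The genuine gap is in $(1)\iff(2)$, specifically in the direction $(2)\Rightarrow(1)$. Given an ergodic non-whirly near-action on $(X,\mu)$, you propose to pass to the Gelfand spectrum of a separable $G$-invariant unital $C^*$-subalgebra of $L^{\infty}(X,\mu)$ that separates points of the measure algebra. But a near-action does not act norm-continuously on $L^{\infty}(X,\mu)$: for a generic such subalgebra the induced $G$-action on the spectrum is an action by homeomorphisms that is \emph{not} jointly continuous, so the spectrum is not a compact $G$-space in the sense required by (2). This is precisely the spatial non-realization phenomenon at the heart of \cite{GTW} --- if your construction worked as stated, every near-action would have a spatial realization, contradicting their main theorem. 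Your claimed dictionary (``whirliness corresponds precisely to the pushed-forward measure being concentrated on the fixed points of the spectrum'') also fails for a point-separating subalgebra: there the pushforward is a copy of $\mu$ itself, which is never concentrated on fixed points for a nontrivial ergodic action, whirly or not. What is actually needed, and what the paper invokes, is Theorem 3.10 of \cite{GW}: every non-whirly measure-preserving near-action admits a \emph{non-trivial spatial factor}, i.e., a non-trivial separable invariant subalgebra on which the action \emph{is} continuous in the relevant sense, producing a genuine compact metrizable $G$-space carrying an ergodic invariant measure not supported on a fixed point (made point-transitive by passing to the support of the image measure and using topological transitivity). That existence statement is a theorem, not a matter of unwinding definitions, and without it $(2)\Rightarrow(1)$ does not close. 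The converse $(1)\Rightarrow(2)$ is the easy half --- joint continuity of a genuine compact $G$-space action, together with an ergodic decomposition over a countable dense subgroup, directly obstructs whirliness (Proposition 3.3 of \cite{GTW}) --- but it too is not the ``restatement'' you describe; the equivalence of Theorems \ref{th:gtwwhirly} and \ref{th:gtwfixed} asserted in the introduction is exactly what Theorem \ref{th:equivalent} is meant to prove, so it cannot be cited as input.
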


\begin{proof}
(1)$\Rightarrow$(2): By contraposition. Let $X$ be a compact metrizable $G$-space $X$ and $\mu$ an invariant measure on $X$ which is not supported on a set of fixed points. An ergodic decomposition of $\mu$ performed using a dense countable subgroup of $G$ as in \cite{P06}, proof of Thm. 7.1.5 (pp.153-154), allows us to assume that $\mu$ is an ergodic probability measure which is not supported on a single fixed point. According to Proposition 3.3 in \cite{GTW}, the action of $G$ on the measure space $(X,\mu)$ is not whirly, because it admits a spatial realization. And of course one can assume without loss in generality that $(X,\mu)$ is non-atomic (by multiplying it with the interval upon which $G$ acts trivially, if necessary).
\par
(2)$\Rightarrow$(3): trivially true.
\par
(3)$\Rightarrow$(1): Again, a proof by contraposition. According to Thm. 3.10 in \cite{GW}, every measure preserving near-action that is not whirly admits a non-trivial spatial factor, $Y$, which can be assumed to be a compact metrizable $G$-space (Becker and Kechris, \cite{BK}, Thm. 5.2.1, cf. Thm. 0.4 in \cite{GTW}). If the original measure $\mu$ was $G$-ergodic, so will be its direct image, $\nu$, on $Y$. Assume that the measure $\nu$ is full, by replacing $Y$, if necessary, with the support of $\nu$. By applying Thm. 4.27 in \cite{Gl2} to a countable dense subgroup $\tilde G$ of $G$, we conclude that $Y$ is a topologically transitive $G$-space, and so point transitive.
\par
(2)$\Rightarrow$(4): Suffices to prove that, given a $f\in C(\SS(G))$ and $g\in G$, one has
\[\int \abs{f-\,^gf}\,d\mu=0.\]
Let $A_f$ denote the smallest invariant unital $C^\ast$-subalgebra of $\RUCB(G)$ containing $f$. This algebra is separable, and the space of maximal ideals $X_f$ is therefore a metrizable compact $G$-space. 
Furthermore, the function $f$ factors through the canonical equivariant continuous map $\pi_f\colon\SS(G)\to X_f$, that is, for some continuous $\bar f\colon X_f\to\C$, we have $f=\bar f\circ \pi_f$. Denote $\mu_f=\mu\pi_f^{-1}$. This is an invariant probability measure on $X_f$.
Using our assumptions, we connclude:
\[\int \abs{f-\,^gf}\,d\mu=\int \abs{\bar f-\,^g\bar f}\,d\mu_f=0.\]
\par
(4)$\Rightarrow$(3): Assume $G$ is amenable. Let $(X,x_0)$ be a metrizable $G$-ambit, and let $\mu$ be an invariant probability measure on $X$. The inverse image of $\mu$ under the (surjective) morphism of ambits $\SS(G)\to X$, $e\mapsto x_0$ is a weak$^\ast$-compact family of probability measures which is non-empty (by the Hahn-Banach theorem), $G$-invariant, and convex. The action of $G$ on the space of probability measures is affine, meaning there is an invariant measure $\nu$ on the greatest ambit, whose direct image is $\mu$. Since $\nu$ is supported on a set of fixed points, so is its direct image. 
\end{proof}

\begin{example}
The implication (4)$\Rightarrow$(3) in general fails for non-amenable Polish groups, e.g. the free group $F_2$ with the discrete topology satisfies (4) on a logical technicality, but of course admits plenty of ergodic actions and no nontrivial whirly ones.
\end{example}

\begin{example}
A Polish group $G$ satisfying the equivalent conditions $(1)-(3)$ need not have the fixed point on compacta property, nor be amenable. 

As observed in \cite{GTW}, Rem. 1.7, if a Polish group admits no non-trivial weakly continuous near-actions on the standard Lebesgue space --- equivalently, no non-trivial strongly continuous unitary representations --- then $G$ satisfies the condition (1) in Thm. \ref{th:equivalent} in a trivial way. 

According to Megrelishvili \cite{megrelishvili01}, the Polish group $\Homeo_+[0,1]$ of homeomorphisms of the unit interval fixing the endpoints, with the compact-open topology, does not even admit non-trivial strongly continuous isometric representations in reflexive Banach spaces. Now, this group is amenable, as it has the fixed point on compacta property \cite{P98a}.

However, a sister group $\Homeo_+(\s^1)$ of orientation-preserving homeomorphisms of the circle is not amenable (as there is obviously no invariant probability measure on the circle) and also admits no nontrivial unitary representations, as a direct consequence of Megrelishvili's theorem: this group equals the closed normal subgroup generated by the stabilizer of any point, which stabilizer is isomorphic to $\Homeo_+[0,1]$. 
\end{example}

However, the following is immediate.

\begin{corollary}
Let a Polish group $G$ satisfy any of the equivalent conditions (1)-(3) in Theorem \ref{th:equivalent}. If $G$ is amenable, then it has the fixed point on compacta property. \qed
\end{corollary}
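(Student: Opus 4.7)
The plan is to reduce the fixed point on compacta property for $G$ to producing a single $G$-fixed point inside the greatest ambit $\SS(G)$, and then to extract such a point by combining amenability with condition (4) of Theorem \ref{th:equivalent}, which is available under the amenability hypothesis.

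First, I would note that for any compact $G$-space $X$ and any $x\in X$ the orbit closure $\overline{Gx}$ is a $G$-ambit with distinguished point $x$, and so by the universal property of the greatest ambit there is a continuous $G$-equivariant map $\pi\colon\SS(G)\to\overline{Gx}\subseteq X$ sending the distinguished point of $\SS(G)$ to $x$. Any $G$-fixed point of $\SS(G)$ is sent by $\pi$ to a $G$-fixed point of $X$, so it is enough to exhibit one fixed point in $\SS(G)$.

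Second, amenability of $G$ means, by one of its standard equivalent formulations, that $\RUCB(G)\cong C(\SS(G))$ carries a left-invariant mean, which via the Riesz representation theorem corresponds to a $G$-invariant regular Borel probability measure $\nu$ on $\SS(G)$. By Theorem \ref{th:equivalent}, the amenability of $G$ upgrades the chain of implications to $(1)\iff(2)\iff(3)\iff(4)$, so condition (4) is at our disposal. Therefore $\nu$ is supported on the set of $G$-fixed points of $\SS(G)$, and since $\nu$ is a probability measure this support is non-empty; projecting via $\pi$ yields a fixed point in $X$, as required.

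The argument encounters essentially no obstacle beyond assembling the correct ingredients: the reduction to $\SS(G)$ is immediate from its universal property, and the extraction of a fixed point follows from (4) applied to the invariant measure supplied by amenability. The only delicate point worth flagging is that $\SS(G)$ is in general non-metrizable, so condition (2) cannot be invoked for it directly; this is precisely why the argument routes through (4) rather than (2), and why the amenability assumption is needed to secure (4).
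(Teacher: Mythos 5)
Your proof is correct and matches the argument the paper leaves implicit (the corollary is stated as ``immediate'' with no written proof): amenability supplies a $G$-invariant regular Borel probability measure on $\SS(G)$, condition (4) of Theorem \ref{th:equivalent} --- available precisely because $G$ is amenable --- forces that measure to live on the (hence non-empty) set of fixed points, and the universal property of the greatest ambit transports a fixed point to every compact $G$-space. Your remark that one must route through (4) rather than (2), since $\SS(G)$ is non-metrizable, is exactly the right point to flag.
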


Let us say that a Polish group is {\em whirly amenable} if $G$ is amenable and satisfies equivalent conditions (1)-(3) (and thus (4)) in Thm. \ref{th:equivalent}.
Inside of the class of Polish groups, we have

\[\mbox{L\'evy groups}\subsetneqq\mbox{whirly amenable groups}\subsetneqq\mbox{groups with  f.p.c. property.}\]

The fact that the inclusion on the l.h.s. is strict, will only be established at the end of this article. 

\section{L\'evy nets of measures}

Let $(\nu_{\beta})$ be a net of regular Borel probability measures on a uniform space $(X,{\mathcal U})$. We say that the net $(\nu_{\beta})$ {\em concentrates,} or forms a {\em L\'evy net}, if for every net of Borel sets $(A_{\beta})$, $A_{\beta}\subseteq G$, with the property \[\varliminf\mu_{\beta}(A_{\beta})>0\]
and every entourage of the diagonal $V\in{\mathcal U}_X$, one has \[\lim\nu_{\beta}\left(V[A]\right)=1.\]

We need the following quantitative measure of concentration, introduced in \cite{GrM}. Let $(X,d,\mu)$ be a metric space with measure. The {\em concentration function} of $X$ is defined for all $\e\geq 0$ by the rule

\[\alpha_X(\e)=
\begin{cases} \frac 12, & \mbox{ if $\e=0$,} \\
1-\inf\left\{\mu\left(B_\e\right) \colon
B\subseteq X, ~~ \mu(B)\geq\frac 12\right\}, &
\mbox{if $\e>0$.}
\end{cases}
\]
A net of measures $(\nu_\beta)$ on a metric space $(X,d)$ is a L\'evy net if and only if 
\[\alpha_{(X,d,\nu_{\beta})}(\e)\to 0\mbox{ pointwise in $\e$ on $(0,+\infty)$}.\]

Remark that if $f\colon X\to Y$ is a uniformly continuous map and $(\nu_{\beta})$ a 
L\'evy net of probability measures on $X$, then the net $(f_{\ast}\mu_\beta)$ of push-forward measures is a 
L\'evy net in $Y$. In particular, if a net of measures 
concentrates in a Polish group $G$, then it will 
concentrate in the greatest ambit $\SS(G)$ as well.

\begin{example}
Let $G$ be a L\'evy group in the sense of Gromov and Milman, with a corresponding sequence of approximating compact subgroups $(K_n)$. The sequence of the probability Haar measures, $\mu_n$, on subgroups $K_n$ is a L\'evy net, simply as a part of the definition of a L\'evy group.
\end{example}

\begin{remark}
As shown by Farah and Solecki \cite{FS}, a L\'evy group can admit an approximating sequence of compact subgroups $(H_n)$ whose probability Haar measures $\nu_n$ {\em do not concentrate}, that is, do not form a L\'evy sequence. Still, such sequences of subgroups retain a weaker form of concentration. In particular, as proved by the same authors, for every big set $A\subseteq G$ (that is, one has $KA=G$ for a suitable compact $K\subseteq G$) and every neighbourhood $V\ni e$, one has $\lim_{n\to\infty}\nu_n(VA)=1$. It remains unknown if every Polish group admitting an approximating sequence of compact subgroups with this property is a L\'evy group ({\em ibid.})
\end{remark}

\begin{example}
\label{ex:kn}
For a Polish group $H$, denote $L^0(X,\mu;H)$ the collection of all (equivalence classes of) measurable maps from a standard Lebesgue measure space $(X,\mu)$ to $H$, equipped with the pointwise algebraic operations and the topology of convergence in measure. This topology is given by the following right-invariant metric (we follow Gromov's notation \cite{Gr}, p. 115):
\[\me_1(f,g) =\inf\left\{\e>0\colon \mu\{x\in X\colon d(f(x),g(x))>\e\}<\e\right\},\]
where $d$ is an arbitrary compatible right-invariant metric on $H$.

The group $L^0(X,\mu;H)$, first considered in \cite{HM}, is a Polish group, and moreover, as a topological space, is homeomorphic to $\ell^2$ provided $H$ is nontrivial \cite{BP}. 

Note that if the group $H$ is compact, then $L^0(X,\mu;H)$ is a L\'evy group (Glasner \cite{Gl1}; Furstenberg and Weiss, unpublished). 

Assume, more generally, that $H$ is an amenable locally compact second countable group.
The following L\'evy net of probability measures on $L^0(X,\mu;H)$, studied by the present author in \cite{P05}, subs. 2.5, will be used in this paper.

It is convenient to identify $(X,\mu)$ with the interval $\I=[0,1]$ equipped with the Lebesgue measure. The indexing set for our net consists of all triples $(F,\delta,n)$, where $F\subseteq H$ is a compact subset, $\delta>0$, and $n\in\N_+$, 
directed in a natural way ($\subseteq,\geq,\leq$). 
Fix a left-invariant sigma-finite Borel measure $\nu$ on $G$. To every pair $(F,\delta)$ as above assign a compact F\o lner set $K=K_{F,\delta}\subseteq G$ so that
\[
\forall g\in F,~~\frac{\nu(gK\bigtriangleup K)}{\nu(K)}<\delta.
\]
The $n$-th topological power $K^n$ can be identified with the set of $K$-valued functions constant on every interval $[i/n, (i+1)/n)$, $i=0,1,\dots,n-1$.
Denote by $\nu_{F,\delta,n}$ the product measure $(\nu\vert_K)^n$ normalized to one.

To see that the net of measures $(\nu_{F,\delta,n})$ is L\'evy, observe the following. The family of measure spaces $(X^n,(\nu\vert_K)^n)$, equipped with the normalized Hamming metric
\[d_n(x,y)=\frac 1n \left\vert \{i=1,\ldots,n\colon x_i\neq y_i\}\right\vert,\]
is a well-known example of a L\'evy family, satisfying
\[\alpha_{K^n}(\e)\leq e^{-\e^2n/8}.\]
(Cf. e.g. \cite{ledoux}, Eq. (1.24) on p. 18, where the formula has to be adjusted because the distance is non-normalized).

It is easy to see that, for all $f,g\in K^n$,
\[\me_1(f,g)\leq d_n(f,g),\]
and consequently one has for every compact $F\subseteq H$, $0<\delta<1$, and $n\in\N$:
\[\alpha_{(L^0(X,\mu;H),\me_1,\nu_{F,\delta,n})}(\e)\leq 2e^{-\e^2n/2}.\]
\end{example}

\begin{remark}
Of course, strictly speaking, every topological group admits a L\'evy sequence of regular probability measures, indeed any sequence of point masses is such. What makes the above examples interesting, is that the corresponding L\'evy nets of measures converge to invariance in a suitable sense, to be discussed shortly. 
\end{remark}

For a measurable function $f\colon X\to\R$ denote by $M=M_f$ the median value of $f$, that is, a (possibly non-unique) real number satisfying
\[\mu\{x\in X\colon f(x)\leq M\}\geq\frac 12\mbox{ and }
\mu\{x\in X\colon f(x)\geq M\}\geq\frac 12.\]
The following result from \cite{GrM} is by now standard. 

\begin{lemma}
Let $f$ be a $1$-Lipschitz function on a metric space with measure $X$. Then for every $\e>0$
\[\mu\{x\in X\colon \abs{f(x)-M_f}>\e\}\leq 2\alpha(\e).\]
\end{lemma}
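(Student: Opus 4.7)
The plan is to exploit the median condition via the two natural sublevel/superlevel sets and then apply the concentration function bound to each. Set
\[A=\{x\in X\colon f(x)\leq M_f\}\quad\text{and}\quad B=\{x\in X\colon f(x)\geq M_f\}.\]
By the defining property of the median, $\mu(A)\geq\tfrac12$ and $\mu(B)\geq\tfrac12$.

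The key observation is that the $1$-Lipschitz condition forces $f$ to vary by at most $\e$ on $\e$-neighbourhoods: if $x\in A_\e$, pick $y\in A$ with $d(x,y)\leq\e$; then $f(x)\leq f(y)+\e\leq M_f+\e$. Symmetrically, $x\in B_\e$ gives $f(x)\geq M_f-\e$. Consequently
\[A_\e\cap B_\e\;\subseteq\;\{x\in X\colon \abs{f(x)-M_f}\leq \e\}.\]

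Now plug in the concentration function. Since $\mu(A)\geq\tfrac12$, the definition of $\alpha=\alpha_X$ yields $\mu(A_\e)\geq 1-\alpha(\e)$, and likewise $\mu(B_\e)\geq 1-\alpha(\e)$. By the union bound,
\[\mu\bigl((A_\e)^c\cup(B_\e)^c\bigr)\leq 2\alpha(\e),\]
so $\mu(A_\e\cap B_\e)\geq 1-2\alpha(\e)$. Combined with the inclusion above, this gives the complementary bound
\[\mu\{x\in X\colon \abs{f(x)-M_f}>\e\}\leq 2\alpha(\e).\]

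There is no real obstacle; the only subtlety is a convention check on open vs.\ closed $\e$-neighbourhoods, which is absorbed by replacing $<$ with $\leq$ in the Lipschitz estimate and does not affect the final strict inequality in the statement.
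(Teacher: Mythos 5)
Your proof is correct and is precisely the standard Gromov--Milman argument: split at the median into the two half-measure sets $A$ and $B$, use the $1$-Lipschitz bound to see that $A_\e\cap B_\e$ lies in $\{\abs{f-M_f}\leq\e\}$, and apply the concentration function to each set together with a union bound. The paper itself omits the proof, citing the result as standard, so there is nothing to compare beyond noting that yours is the canonical argument and handles the open-versus-closed neighbourhood convention correctly.
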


As an immediate consequence, we get the following.

\begin{lemma}
\label{l:immediate}
Let $X=(X,d,\mu)$ be a metric space with measure of diameter $\leq 1$, and let $f$ be a $1$-Lipschitz real-valued function on $X$. Then for every $\e>0$
\[\left\vert\E_{\mu}(f)-M_f\right\vert < \e + 2\alpha(\e).\]
\qed
\end{lemma}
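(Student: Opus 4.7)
The plan is to reduce the estimate directly to the preceding concentration lemma by the usual device of splitting the domain of integration according to whether $f$ is close to its median, and then using the diameter hypothesis to turn the tail measure bound into an integral bound.

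First I would start from the trivial identity $\E_\mu(f) - M_f = \E_\mu(f - M_f)$ and pass to absolute values via the triangle inequality for integrals, so that the problem becomes one of estimating $\E_\mu |f - M_f|$. Then I would split $X = A \cup A^c$ with $A = \{x \in X : |f(x) - M_f| \leq \e\}$ and treat the two pieces separately. On $A$ the integrand is pointwise $\leq \e$, so this piece contributes at most $\e$ to the expectation regardless of $\mu(A)$. On $A^c$ the preceding lemma controls the measure by $2\alpha(\e)$, while the hypotheses (diameter $\leq 1$, $f$ being $1$-Lipschitz, and the median lying in the essential range of $f$, which is contained in an interval of length at most $\mathrm{diam}\,X\leq 1$) give the pointwise bound $|f(x) - M_f|\leq 1$. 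Multiplying these two bounds yields a contribution of at most $2\alpha(\e)$. Summing the two pieces produces the desired estimate.

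There is no substantive obstacle here: the argument is genuinely routine once the preceding lemma is available. The only delicate point worth stating carefully is the observation that $M_f$ lies in the essential range of $f$, so that the diameter condition transfers to a pointwise bound on $|f - M_f|$; this is what allows us to convert a measure estimate on the bad set into an integral estimate without any additional boundedness assumption on $f$.
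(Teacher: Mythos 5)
Your argument is correct and is exactly the intended one --- the paper states this lemma without proof as an ``immediate consequence'' of the preceding deviation-from-the-median lemma, and your decomposition into $A=\{x\colon \abs{f(x)-M_f}\le\e\}$ and its complement, bounding the contribution of the latter by $1\cdot 2\alpha(\e)$ via the diameter hypothesis, is the standard derivation. (One hair worth splitting: $M_f$ need not lie in the essential range of $f$, only between its essential infimum and supremum; but that interval still has length at most $\mathrm{diam}\,X\le 1$ because $f$ is $1$-Lipschitz, so your pointwise bound $\abs{f-M_f}\le 1$ on the bad set stands.)
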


Since for L\'evy families $\alpha(\e)\to 0$ for each $\e>0$, we obtain more or less directly:

\begin{lemma}
Let $(X_\beta,d_\beta,\mu_\beta)$ be a L\'evy net of metric spaces with measure. For every $\beta$ let $B_\beta\subseteq X_\beta$ be a Borel subset, so that
\[\varliminf_{\beta}\, \mu_\beta(B_\beta)>0.\]
Then
\[\sup_{f\in\Lip_1(X)}\left\vert \frac{1}{\mu_{\beta}(B_{\beta})}\int_{B_\beta}f\,d\mu_\beta - \int_{X_\beta}f\,d\mu_\beta\right\vert \to 0,\]
where $\Lip_1(X)$ is the family of all $1$-Lipschitz real-valued functions on $X$. \qed
\label{l:integration}
\end{lemma}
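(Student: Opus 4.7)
The plan is to introduce as pivot the $\mu_\beta$-median $M_{f,\beta}$ of $f$ and to show that both $\int_{X_\beta} f\,d\mu_\beta$ and $\frac{1}{\mu_\beta(B_\beta)}\int_{B_\beta} f\,d\mu_\beta$ lie within $\e + O(\alpha_\beta(\e))$ of $M_{f,\beta}$, uniformly in $f\in\Lip_1(X_\beta)$. Adding a constant to $f$ leaves the expression under the supremum unchanged, so the $f$-dependence of the pivot $M_{f,\beta}$ costs nothing. I will work under the convention of Lemma \ref{l:immediate}, assuming the diameters of the $X_\beta$ are bounded by $1$, which is the situation in all applications relevant to the paper.

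The first of the two approximations is exactly Lemma \ref{l:immediate}. The second is a parallel computation: split $B_\beta$ into the ``good'' part where $\abs{f-M_{f,\beta}}\leq\e$ and its ``bad'' complement inside $B_\beta$. On the good part the contribution to $\mu_\beta(B_\beta)^{-1}\int_{B_\beta}\abs{f-M_{f,\beta}}\,d\mu_\beta$ is at most $\e$ trivially; on the bad part the integrand is bounded by the diameter, while the part itself has $\mu_\beta$-measure at most $2\alpha_\beta(\e)$ by the median-concentration inequality stated just before Lemma \ref{l:immediate}. After the normalisation by $\mu_\beta(B_\beta)$, the bad contribution is at most $2\alpha_\beta(\e)/\mu_\beta(B_\beta)$.

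Combining the two estimates via the triangle inequality produces a uniform-in-$f$ bound of the shape $2\e + C_\beta\,\alpha_\beta(\e)$ with $C_\beta = 2 + 2/\mu_\beta(B_\beta)$ eventually bounded, since $\varliminf_\beta\mu_\beta(B_\beta)>0$ by hypothesis. Because $\alpha_\beta(\e)\to 0$ pointwise in $\e$ for a L\'evy net of measures, one gets $\varlimsup_\beta\sup_{f\in\Lip_1(X_\beta)} \abs{\frac{1}{\mu_\beta(B_\beta)}\int_{B_\beta}f\,d\mu_\beta - \int_{X_\beta}f\,d\mu_\beta} \leq 2\e$, and letting $\e\to 0$ finishes the argument.

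The only delicate point is the uniformity in $f\in\Lip_1(X_\beta)$, but this comes for free: the concentration inequality $\mu_\beta\{\abs{f-M_{f,\beta}}>\e\}\leq 2\alpha_\beta(\e)$ depends only on the Lipschitz constant of $f$ and on $\alpha_\beta$, not on $f$ itself, so every intermediate estimate is manifestly uniform across $\Lip_1(X_\beta)$. Thus the supremum in the statement presents no extra difficulty beyond the pointwise bound.
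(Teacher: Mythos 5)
Your argument is correct and is exactly the one the paper leaves implicit (the lemma is stated without proof, as following ``more or less directly'' from the median-concentration estimate and Lemma \ref{l:immediate}): pivot on the median $M_{f,\beta}$, bound both the full average and the conditional average over $B_\beta$ by $\e$ plus a multiple of $\alpha_\beta(\e)$, and use $\varliminf_\beta\mu_\beta(B_\beta)>0$ to keep the normalizing factor bounded. Your explicit caveat that the diameters should be taken bounded by one is the right reading of the statement, since Lemma \ref{l:immediate} carries that hypothesis and in every application the paper chooses a compatible right-invariant metric bounded by one.
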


Next we will show that every L\'evy net on a Polish group serves as an approximate left identity for invariant measures on the greatest ambit. 

\begin{lemma}
Let $(\nu_\beta)$ be a L\'evy net of probability measures on a Polish group $G$, and let $\mu$ be an invariant regular Borel probability measure on the greatest $G$-ambit $\SS(G)$. Then
\[\nu_{\beta}\ast\mu \Rightarrow \mu.\]
\label{l:convolution}
\end{lemma}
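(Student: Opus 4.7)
The plan is to show the stronger statement that $\nu_\beta \ast \mu = \mu$ holds as an exact equality for every $\beta$, from which the weak convergence $\nu_\beta \ast \mu \Rightarrow \mu$ is immediate. The L\'evy hypothesis on $(\nu_\beta)$ plays no essential role in this particular lemma; the whole work is done by the $G$-invariance of $\mu$, together with Fubini's theorem. (Presumably the L\'evy property will be exploited in the subsequent, stronger ``approximate identity'' statements, where the invariance assumption is weakened or removed.)

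I would begin by fixing a test function $f \in C(\SS(G))$ and expanding the convolution directly from the definition:
\[
\int f \, d(\nu_\beta \ast \mu) \;=\; \int_{G \times \SS(G)} f(g\xi) \, d(\nu_\beta \times \mu)(g,\xi).
\]
Since $\SS(G)$ is a compact $G$-space, the action $\tau \colon G \times \SS(G) \to \SS(G)$ is jointly continuous; in particular $(g,\xi) \mapsto f(g\xi)$ is a bounded jointly Borel-measurable function, so Fubini's theorem is applicable.

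Integrating over $\SS(G)$ first and using the invariance of $\mu$ to simplify the inner integral, I get
\[
\int f \, d(\nu_\beta \ast \mu) \;=\; \int_G \left(\int_{\SS(G)} f(g\xi)\, d\mu(\xi)\right) d\nu_\beta(g) \;=\; \int_G \left(\int_{\SS(G)} f\, d\mu\right) d\nu_\beta(g) \;=\; \int f\, d\mu,
\]
since $\nu_\beta$ is a probability measure on $G$. Because this holds for every $f \in C(\SS(G))$, the Riesz representation theorem yields $\nu_\beta \ast \mu = \mu$ as Radon probability measures on $\SS(G)$, which obviously implies $\nu_\beta \ast \mu \Rightarrow \mu$.

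The only point that requires care is the joint Borel measurability of $\tau$ justifying Fubini; on the greatest ambit this is automatic from joint continuity of the canonical action. No concentration estimate, no $1$-Lipschitz approximation, and indeed no L\'evy hypothesis are required for the conclusion as stated, so I would not anticipate a substantive obstacle.
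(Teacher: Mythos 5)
Your argument is correct, and it in fact proves more than the lemma asserts: for \emph{any} Borel probability measure $\nu$ on $G$ (L\'evy or not), invariance of $\mu$ plus Fubini gives the exact identity $\nu\ast\mu=\mu$, since the inner integral $\int_{\SS(G)}f(g\xi)\,d\mu(\xi)$ is constant equal to $\int f\,d\mu$ for every fixed $g$. The one technical point you flag — applicability of Fubini — is genuinely fine: because $G$ is second countable, the jointly continuous function $(g,\xi)\mapsto f(g\xi)$ is measurable for the product $\sigma$-algebra $\mathcal{B}(G)\otimes\mathcal{B}(\SS(G))$ (approximate it pointwise by $F_n(g,\xi)=f(g_i\xi)$ on a countable Borel partition of $G$ into sets of diameter $<1/n$), and the paper's own proof already relies on exactly the same Fubini step. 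This is a genuinely different route from the paper's. There, concentration is used to show that for large $\beta$ the $1$-Lipschitz function $g\mapsto{}_xf(g)=f(gx)$ is within $\e$ of its $\nu_\beta$-expectation off a set of $\nu_\beta$-measure $<3\e$, uniformly in $x$; Fubini then produces a single $g_0$ with $f(g_0x)\approx\E_{\nu_\beta}({}_xf)$ for $\mu$-most $x$, and invariance of $\mu$ is invoked only for that one translation $g_0$. Your proof instead integrates in $\xi$ first and uses invariance under every $g$ simultaneously, which is why the L\'evy hypothesis and the Lipschitz reduction evaporate. What the paper's heavier argument buys is not the lemma itself but the method: the same ``replace the $\nu_\beta$-average over $g$ by a single evaluation'' device is the engine of the theorem in Section \ref{s:invariance}, where the invariance being exploited is that of the net $(\nu_\beta)$ rather than of $\mu$, and concentration is then indispensable. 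So your proof is a legitimate (indeed cleaner and stronger) replacement for this particular lemma, but it does not substitute for the concentration technique where it is really needed later.
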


\begin{proof}
We want to show that for every $f\in C(\SS(G))$,
\[\int f\,d(\nu_{\beta}\ast\mu) \to \int f\, d\mu.\]
It is enough to choose a right-invariant compatible metric $d$ on $G$, bounded by one, and verify the statement for the extensions by continuity of $1$-Lipschitz functions on $G$ over $\SS(G)$. Let $f\colon \SS(G)\to\R$ be such an extension, and  let $\e>0$ be arbitrary. Consider the function $f\circ\tau$ on $G\times X$:
\[f\tau(g,x) = f(gx).\]
For every $x\in X$, the function $\,_xf$ on $G$ obtained from $f\tau$ by fixing $x$ and letting $g$ vary, is $1$-Lipschitz by Lemma \ref{l:lipschitz}. Denote by $\alpha_{\beta}$ the concentration function of $(G,d,\nu_{\beta})$. Since $\alpha_\beta(\e)\to 0$, for sufficiently large $\beta$ and our fixed value of $\e$ one has
\[\alpha_\beta(\e)<\e,\]
and therefore (Lemma \ref{l:immediate})
\[\nu_{\beta}\left\{g\in G\colon \left\vert \,_xf(g) - \E_{\nu_{\beta}}\,(_xf)\right\vert >\e\right\}<3\e,\]
which holds uniformly in $x$. Consequently, by the Fubini theorem,
\[(\nu_{\beta}\times\mu)\left\{ (g,x)\in G\times \SS(G)\colon \left\vert \,_xf(g) - \E_{\nu_{\beta}}\,_xf \right\vert >\e\right\}<3\e,\]
and there exists $g_0\in G$ such that
\[\mu\left\{x\in \SS(G)\colon \left\vert f(g_0x) - \E_{\nu_{\beta}}\,(_xf) \right\vert >\e\right\}<3\e.\]
Since the measure $\mu$ is $G$-invariant, for all sufficiently large $\beta$
\begin{eqnarray*}
\left\vert \int f\,d(\nu_{\beta}\ast\mu)-\int f(x)\,d\mu\right\vert &=&
\left\vert \int f(gx)\,d\nu_{\beta}(g)d\mu(x)-\int f(g_0x)\,d\mu\right\vert \\
&=&\left\vert \int d\mu \left[\int d\nu_{\beta}\, \left(f(gx) - f(g_0x)\right)\right]\right\vert \\
&\leq& 4\e.
\end{eqnarray*}
\end{proof}

\section{\label{s:invariance}Convergence to invariance}

A net $(\nu_\beta)$ of probability measures on a topological group $G$ {\em converges to invariance in a weak sense} if for every $g\in G$ one has
\[g\ast\nu_\beta - \nu_\beta\Rightarrow 0.\]
(As we noted elsewhere, the convergence of signed measures is to be understood as weak convergence on the greatest ambit of $G$, that is, the weak$^\ast$ convergence with regard to bounded right uniformly continuous functions on $G$.)

It is easy to see that a Polish group $G$ is amenable if and only if it admits a net of probability measures weakly converging to invariance. The following result generalizes Theorem \ref{th:grm}.

\begin{theorem}
Let a Polish group $G$ admit a L\'evy net $(\nu_\beta)$ of probability measures weakly converging to invariance. Then $G$ has the fixed point on compacta property.
\label{th:fpc}
\end{theorem}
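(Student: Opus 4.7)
The plan is to generalize the classical Gromov--Milman argument behind Theorem \ref{th:grm}: construct an invariant probability measure on a compact $G$-space $X$ as a weak$^\ast$ cluster point of push-forwards of the $\nu_\beta$, and use concentration to force this cluster point to be a point mass. I would first reduce to the case that $X$ is metrizable. Given an arbitrary compact $G$-space $X$, take by Zorn's lemma a minimal invariant closed subset $Y\subseteq X$. For any $f\in C(Y)$, the smallest closed unital $G$-invariant $\ast$-subalgebra $A_f\subseteq C(Y)$ containing $f$ is separable (because $G$ is), so its Gelfand dual $Y_f$ is a metrizable compact $G$-space, with an equivariant continuous surjection $\pi_f\colon Y\to Y_f$. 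Granting the metrizable case, $Y_f$ has a fixed point; minimality of $Y_f$ (inherited from $Y$) then forces $Y_f$ to be a singleton, so $f$ is constant on $Y$. Since $f\in C(Y)$ was arbitrary, $Y$ itself is a singleton, and that point is $G$-fixed.

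For the metrizable case, fix any $x_0\in X$ and consider the orbit map $\omega\colon G\to X$, $\omega(g)=gx_0$, which is right uniformly continuous. Set $\mu_\beta=\omega_\ast\nu_\beta$. Push-forward under a uniformly continuous map preserves the L\'evy property (one checks this directly from the definition via entourages), so $(\mu_\beta)$ is a L\'evy net of Borel probability measures on the compact metric space $X$. Passing to a subnet, $\mu_\beta\Rightarrow\mu$ for some Borel probability measure $\mu$ on $X$. The limit $\mu$ is $G$-invariant: for every $f\in C(X)$ and every $g\in G$, both $h\mapsto f(ghx_0)$ and $h\mapsto f(hx_0)$ lie in $\RUCB(G)$, and the weak convergence to invariance of $(\nu_\beta)$ yields
\[\int f\,d(g\ast\mu_\beta)-\int f\,d\mu_\beta=\int [f(ghx_0)-f(hx_0)]\,d\nu_\beta(h)\to 0.\]

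The key step is to show that $\mu$ is a point mass. If not, $\supp\mu$ would contain two distinct points $a,b$, and one can pick disjoint open balls $U\ni a$, $V\ni b$ of radii $r$ outside the at most countable set where $\mu(\partial B(\cdot,r))>0$, with $d(U,V)>0$. Portmanteau then gives $\mu_\beta(U)\to\mu(U)>0$, so $\varliminf_\beta\mu_\beta(U)>0$. Applying the L\'evy property to the constant Borel net $A_\beta\equiv U$ at scale $\e<d(U,V)$ yields $\mu_\beta(U_\e)\to 1$, whence $\mu_\beta(V)\to 0$, contradicting $\varliminf_\beta\mu_\beta(V)\geq\mu(V)>0$. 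So $\mu=\delta_y$ for some $y\in X$, and $G$-invariance forces $gy=y$ for every $g\in G$.

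I expect the main technical obstacles to be the two structural verifications: that push-forward under the orbit map preserves the L\'evy property (requiring one to track entourages carefully through a uniformly continuous map), and that the reduction from non-metrizable to metrizable compact $G$-spaces via separable invariant $C^\ast$-subalgebras is watertight. Once those are in place, the rest is a direct translation of the Gromov--Milman argument, with the net $(\nu_\beta)$ playing the role of the normalized Haar measures on $K_n$ and weak convergence to invariance replacing exact invariance. An alternative route would be to establish property (4) of Theorem \ref{th:equivalent} on $\SS(G)$ (using Lemma \ref{l:convolution} and the amenability of $G$, which is immediate from the existence of a net of measures weakly converging to invariance) and then to invoke the Corollary; but the direct push-forward argument above seems the most transparent generalization of the classical L\'evy-group proof.
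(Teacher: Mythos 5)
Your proof is correct, and its engine is the same as the paper's: a weak$^\ast$ cluster point of the (pushed-forward) L\'evy net is forced by concentration to be a point mass, and weak convergence to invariance makes that cluster point $G$-invariant, so its support is a fixed point. The difference is one of venue. The paper runs the argument once on the greatest ambit: it takes a cluster point $\nu$ of $(\nu_\beta)$ among Radon probability measures on $\SS(G)$, observes via concentration that every $1$-Lipschitz function on $G$ --- hence, by density of Lipschitz functions in $\RUCB(G)=C(\SS(G))$, every continuous function on $\SS(G)$ --- is $\nu$-a.e.\ constant, concludes $\nu=\delta_y$ with $gy=y$, and is done in a few lines, since a fixed point in $\SS(G)$ yields one in every compact $G$-space. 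You instead avoid the non-metrizable ambit by first reducing to metrizable compact $G$-spaces (minimal subsets plus separable invariant $C^\ast$-subalgebras --- the same device the paper uses inside the proof of (2)$\Rightarrow$(4) of Theorem \ref{th:equivalent}, so that reduction is indeed watertight) and then pushing $\nu_\beta$ forward along the orbit map, which is legitimate by the paper's own remark that uniformly continuous maps preserve L\'evy nets; your two-disjoint-balls argument is a correct elementary substitute for the Lipschitz-density step. The paper's route buys brevity and dispenses with the metrizability reduction; yours keeps all the measure theory on compact metric spaces where weak$^\ast$ compactness and the portmanteau theorem are standard. Both are sound.
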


\begin{proof}
By proceeding to a subnet if necessary, we can assume that $(\nu_\beta)\Rightarrow \nu$, where $\nu$ is a probability measure on $\SS(G)$. This measure inherits from the approximating net the property that for every $1$-Lipschitz function $f$ and each $\e>0$, 
\[\nu\{x\in\SS(G)\colon \abs{f(x)-\E_{\nu}(f)}>\e\}<\e,\]
in other words, $f$ is constant $\nu$-a.e. This is only possible if $\nu$ is supported on a single point. On the other hand, for every $g\in G$ one clearly has
\[\nu \Leftarrow g\ast\nu_\beta\Rightarrow g\ast\nu,\]
so $\nu$ is $G$-invariant. We conclude: the support of $\nu$ is a $G$-fixed point in the greatest ambit.
\end{proof}

For our purposes, we need a stronger version of invariance.

Let $(X,d)$ be a metric space. Recall that the {\em Monge-Kantorovich distance} between two probability measures, $\mu$ and $\nu$, on $X$ is given by the formula
\begin{equation} 
\label{kantor}
d_{MK}(\mu_1,\mu_2)
= \inf\left\{\int_{X\times X} d(x,y)\, d\nu~ \colon~ 
\pi_{i,\ast}(\nu) = \mu_i,~ i=1,2\right\},
\end{equation} 
where $\pi_i$ are coordinate projections and $\pi_{i,\ast}(\nu)$ are marginals (direct images of the measure under projections). We refer to \cite{villani} as a general source.

Here is an alternative definition. Assume that the values of $d$ are bounded by one, and denote $\Lip(X)$ the vector space of all Lipschitz real-valued functions on $X$. The minimum of the uniform norm and the value of the smallest Lipschitz constant of a function $f$ defines a norm on $\Lip(X)$. Every probability measure on $X$ is an element of the dual space $\Lip(X)^{\ast}$, and according to a classical result of Kantorovich and Rubinstein,
\[d_{MK}(\mu_1,\mu_2)=\norm{\mu_1-\mu_2},\]
with regard to the dual norm. (Cf. e.g. \cite{villani}, remark 6.5.)

Now let $G$ be a topological group equipped with a compatible right-invariant metric, $d$. 
A net of probability measures $(\nu_{\beta})$ on $G$ {\em converges to invariance in the sense of Monge-Kantorovich distance} if for every $g\in G$
\[d_{MK}(g\ast\nu_{\beta},\nu_{\beta})\to 0.\]

\begin{remark}
It is enough to verify the above condition for all $g$ in a dense subset of $G$, as follows from the observation that 
\[d_{MK}(g\ast\mu,h\ast\mu)\leq d(g,h).\]
Indeed, if $f$ is a $1$-Lipschitz function on $G$, then
\begin{eqnarray*}
\left\vert \int f\,d(g\ast\mu) - \int f\,d(h\ast\mu)\right\vert &=&
\left\vert \int f(gx)\,d\mu(x) - \int f(hx)\,d\mu(x)\right\vert \\
&\leq& \sup_{x}\, d(gx,hx) \\
&=& d(g,h),
\end{eqnarray*}
and the duality result by Kantorovich--Rubinstein settles the matter.
\label{r:dense}
\end{remark}

\begin{remark}
Clearly, convergence to invariance in the sense of Monge-Kantorovich distance implies weak convergence to invariance. 
\label{r:implies}
\end{remark}

\begin{example}
Let $G$ be a Polish group admitting an approximating chain of compact subgroups $(K_n)$. Then the sequence $(\nu_n)$ of probability Haar measures on the groups $K_n$ converges to invariance in the Monge-Kantorovich sense with regard to an arbitrary compatible right-invariant metric $d$ on $G$.

Indeed, in view of Remark \ref{r:dense}, one can assume without loss in generality that $g\in K_N$ for some $N$, in which case for all $n\geq N$
\[g\ast\mu_n=\mu_n.\]
\end{example}

\begin{lemma}
\label{l:bigformulas}
Let $(\nu_\beta)$ be a L\'evy net of probability measures on a Polish group $G$ with the property that for every $g$ from a dense subset of $G$ and each $\beta$, one can choose a Borel subset $B_\beta\subseteq G$ in such a way that
\[\nu_\beta\vert_{B_\beta}  = \left(g\ast\nu_\beta\right)\vert {B_\beta}\]
and 
\[\varlimsup_\beta\,\nu_\beta(B_{\beta})>0.\]
Then the net $(\nu_\beta)$ is converging to invariance in the sense of Monge-Kantorovich, with regard to any right invariant compatible metric $d$ on $G$.
\end{lemma}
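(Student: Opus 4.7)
My plan is to combine the Kantorovich-Rubinstein duality with Lemma \ref{l:integration}, exploiting the fact that left translation is an isometry of the right-invariant metric $d$. First, by Remark \ref{r:dense} it is enough to verify MK-convergence for each fixed $g$ in the specified dense subset of $G$. Fix such a $g$ and, using the hypothesis $\varlimsup_\beta\nu_\beta(B_\beta)>0$, pass to a cofinal subnet (not relabelled) along which $\nu_\beta(B_\beta)\geq c$ for some fixed $c>0$.

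Second, I would observe that left translation $L_g\colon G\to G$ is an isometry of $d$ (by right-invariance), hence uniformly continuous, so the push-forward net $(g\ast\nu_\beta)=(L_g)_\ast\nu_\beta$ is itself a L\'evy net of probability measures on $G$, by the general fact recorded in Section 4 that uniformly continuous images of L\'evy nets are L\'evy. Moreover $(g\ast\nu_\beta)(B_\beta)=\nu_\beta(B_\beta)\geq c$, so along the subnet the sets $B_\beta$ are ``fat'' simultaneously for both L\'evy nets.

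The crucial step is to apply Lemma \ref{l:integration} to each of the two L\'evy nets with the common reference sets $B_\beta$: uniformly over $f\in\Lip_1(G)$,
\[\abs{\int f\,d\nu_\beta-\tfrac{1}{\nu_\beta(B_\beta)}\int_{B_\beta}f\,d\nu_\beta}\to 0,\]
and the analogous statement holds with $\nu_\beta$ replaced throughout by $g\ast\nu_\beta$. Since by hypothesis $\nu_\beta\vert_{B_\beta}=(g\ast\nu_\beta)\vert_{B_\beta}$, the conditional averages on the right-hand sides coincide, and the triangle inequality yields $\sup_{f\in\Lip_1(G)}\abs{\int f\,d\nu_\beta-\int f\,d(g\ast\nu_\beta)}\to 0$, which by the Kantorovich-Rubinstein duality recalled above is exactly the desired MK-convergence.

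The main obstacle I anticipate is conceptual rather than computational: since $\nu_\beta(B_\beta)$ need not approach $1$, the total variation between $\nu_\beta$ and $g\ast\nu_\beta$ may stay uniformly bounded away from $0$, so a naive estimate of the form $\abs{\int f\,d\nu_\beta-\int f\,d(g\ast\nu_\beta)}\leq 2\norm{f}_\infty(1-\nu_\beta(B_\beta))$ is hopeless. The key point is that concentration permits one to substitute a conditional average over any ``fat'' Borel subset for the global average of a $1$-Lipschitz function, and $B_\beta$ plays this role for both measures at once; the L\'evy property is essential precisely because it survives the push-forward under $L_g$.
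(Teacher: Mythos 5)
Your proposal is correct and follows essentially the same route as the paper: the paper's proof is precisely the three-term triangle inequality comparing each of $\int f\,d\nu_\beta$ and $\int f\,d(g\ast\nu_\beta)$ to the common conditional average over $B_\beta$ (via Lemma \ref{l:integration}), followed by Kantorovich--Rubinstein duality. Your added remarks --- that $(g\ast\nu_\beta)$ is itself a L\'evy net because left translation is an isometry for the right-invariant metric, and that one must pass to a subnet to convert the $\varlimsup$ hypothesis into the $\varliminf$ needed for Lemma \ref{l:integration} --- are points the paper leaves implicit, and they are correctly handled.
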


\begin{proof}
For every $1$-Lipschitz function $f$ on $G$ one has
\begin{eqnarray*}
\left\vert \int f\,d \nu_\beta\right. &-& \left.\int f\,d (g\ast\nu_\beta)\right\vert \leq
\left\vert \int f\,d \nu_\beta - \frac{1}{\nu_{\beta}({B_\beta})}\int_{B_\beta} f\,d \nu_\beta \right\vert + \\
&&
\left\vert  \frac{1}{\nu_{\beta}({B_\beta})}\int_{B_\beta} f\,d \nu_\beta - \frac{1}{(g\ast\nu_{\beta})({B_\beta})}\int_{B_\beta} f\,d (g\ast\nu_\beta)
\right\vert + \\
&& 
\left\vert \frac{1}{(g\ast\nu_{\beta})({B_\beta})}\int_{B_\beta} f\,d (g\ast\nu_\beta) - \int f\,d (g\ast\nu_\beta)\right\vert,
\end{eqnarray*}
where the first and the last term converge to zero uniformly in $f$ by force of Lemma \ref{l:integration}, while the middle term vanishes because of our assumption. Now conclude by duality.
\end{proof}

\begin{example}
Let $H$ be an amenable second countable locally compact group.
The net of probability measures $\nu_{F,1/n!,n!}$ on the Polish group $L^0(X,\mu;H)$, which is a subnet of the net $\nu_{F,\e,n}$
defined in Example \ref{ex:kn}, converges to invariance in the sense of Monge-Kantorovich distance (formed with regard to $\me_1$). This follows from Lemma \ref{l:bigformulas} modulo the following observation that (essentially) appears in \cite{P05}. Let $g$ be a simple function with regard to a uniform finite partition of the interval $[0,1]$ into $m$ subintervals. Then for all compact $F\ni g$ and all $n\geq m$, the restrictions of the measures $\nu_{F,1/n!,n!}$ and $g\ast\nu_{F,1/n!,n!}$ coincide on the set $(K\cap gK)^{n!}$, whose $\nu_{F,1/n!,n!}$-measure will either approach or exceed $e^{-1}$ as $n\to\infty$. 
\label{ex:principal}
\end{example} 

\begin{theorem}
Let $G$ be a Polish group admitting a L\'evy net of probability measures converging to invariance with regard to the Monge-Kantorovich distance. Then every invariant measure on a compact $G$-space is supported on the set of $G$-fixed points. 
\end{theorem}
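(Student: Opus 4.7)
My plan is to reduce the statement to condition (4) of Theorem \ref{th:equivalent} and then establish that condition by combining the concentration of $(\nu_\beta)$ with the Monge--Kantorovich convergence to invariance. First, Remark \ref{r:implies} tells us $(\nu_\beta)$ converges weakly to invariance, so $G$ is amenable; by Theorem \ref{th:equivalent} it therefore suffices to verify condition (4): every invariant regular Borel probability measure $\mu$ on the greatest ambit $\SS(G)$ is supported on the set of $G$-fixed points. I would fix a compatible right-invariant metric $d$ on $G$ bounded by $1$. Since the continuous extensions over $\SS(G)$ of $1$-Lipschitz functions on $G$ are uniformly dense in $C(\SS(G))$, letting $g_0$ range over a countable dense subgroup of $G$ and $f$ over a countable dense family of such Lipschitz functions reduces the assertion that $\mu$ is supported on fixed points to the vanishing of
\[
I_{g_0,f}:=\int_{\SS(G)}|f(g_0 x)-f(x)|\,d\mu(x)
\]
for every $g_0\in G$ and every such $f$.

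To show $I_{g_0,f}=0$ I would exploit the invariance of $\mu$ by averaging against $\nu_\beta$. For each $g\in G$ the change of variables $x=gy$ gives $I_{g_0,f}=\int|f(g_0 gy)-f(gy)|\,d\mu(y)$; integrating against $d\nu_\beta(g)$ and applying Fubini yields
\[
I_{g_0,f}=\int_{\SS(G)}d\mu(y)\int_G|{}_yf(g_0 g)-{}_yf(g)|\,d\nu_\beta(g),
\]
where ${}_yf(g):=f(gy)$ is $1$-Lipschitz on $(G,d)$ by Lemma \ref{l:lipschitz}. Writing $M_y$ for the median of ${}_yf$ under $\nu_\beta$ and applying the triangle inequality together with the definition of convolution, one obtains
\[
\int|{}_yf(g_0 g)-{}_yf(g)|\,d\nu_\beta\le\int|{}_yf-M_y|\,d(g_0\ast\nu_\beta)+\int|{}_yf-M_y|\,d\nu_\beta.
\]

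Finally I would argue that both summands on the right vanish in the limit uniformly in $y$. The L\'evy property bounds the second summand by $\int_0^1 2\alpha_{\nu_\beta}(t)\,dt$, which tends to $0$ by bounded convergence; this bound is independent of $y$ because the Lipschitz constant of ${}_yf$ is. Since $|{}_yf-M_y|$ is $1$-Lipschitz, Kantorovich--Rubinstein duality shows that the first summand differs from the second by at most $d_{MK}(g_0\ast\nu_\beta,\nu_\beta)$, which also tends to $0$ by hypothesis and is independent of $y$. Dominated convergence then forces the double integral to tend to $0$; since $I_{g_0,f}$ does not depend on $\beta$, it must equal $0$. The main difficulty, and the reason Lemma \ref{l:lipschitz} together with the right-invariance of $d$ is indispensable, is exactly this uniformity in $y$ of both bounds.
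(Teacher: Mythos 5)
Your proof is correct and follows essentially the same route as the paper's: Lemma \ref{l:lipschitz} makes each $\,_yf$ $1$-Lipschitz, concentration forces it to be nearly constant uniformly in $y$, Monge--Kantorovich convergence to invariance transfers this near-constancy across the translate by $g_0$, and invariance of $\mu$ together with Fubini finishes the argument on the greatest ambit. The only cosmetic differences are that you work with medians and $L^1$ estimates where the paper uses expectations and tail bounds, and that you replace the paper's appeal to Lemma \ref{l:convolution} by a direct change of variables exploiting the exact invariance of $\mu$.
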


\begin{proof}
We want to show that for every $f\in C(\SS(G))$ and all $g\in G$, the function $f$ and its $g$-translate $\,^gf$ are equal $\mu$-a.e., that is, 
\[\int \left\vert f-\,^gf\right\vert \,d\mu =0.\]
Assume, without loss of generality, that $f\vert_G$ is $1$-Lipschitz with regard to a right invariant compatible metric $d$ bounded by one. Let $\e>0$ be arbitrary. 
Due to the Monge-Kantorovich convergence of $(\nu_{\beta})$ to invariance, 
for some $\beta^\prime$ and all $\beta\geq\beta^\prime$ we have 
\begin{equation}
\label{eq:difference}
\left\vert \E_{\nu_{\beta}}(k) - \E_{\nu_{\beta}}(^gk)\right\vert <\e,\end{equation}
uniformly over all $1$-Lipschitz functions $k$, in particular, over all functions of the form $\,_xf$, $x\in\SS(G)$ (cf. Lemma \ref{l:lipschitz}).

Because of concentration, for sufficiently large $\beta$ we further have, uniformly in $x\in X$,
\[\nu_{\beta}\{h\colon \abs{\E_{\nu_\beta}(\,_xf)-\,_xf(h)}>\e \}<\e \mbox{ and }
\nu_{\beta}\{h\colon \abs{\E_{\nu_\beta}(^g_xf)-\,_xf(gh)}>\e \}<\e.
\]
Combining this with Eq. (\ref{eq:difference}), we conclude: given $x\in \SS(G)$, for all $h$ except possibly a set of $\nu_\beta$-measure $<3\e$, the value $\abs{\,_xf(h)-\,_xf(gh)}$ does not exceed $3\e$.
In view of Lemma \ref{l:convolution},
\begin{eqnarray*}
\int \left\vert f -\,^gf\right\vert\,d\mu &=& 
\lim_{\beta} \int \left\vert f(hx) -\,^gf(hx)\right\vert\,d\nu_\beta(h)\,d\mu(x) \\
&<& 6\e,
\end{eqnarray*}
and as $\e$ was arbitrary, the proof is finished.
\end{proof}

Taking into account Theorems \ref{th:equivalent} and \ref{th:fpc}, we obtain the following results, which serve as a source of ``whirly amenable'' Polish groups.

\begin{corollary}
Let a Polish group $G$ admit a L\'evy net $(\nu_\beta)$ of probability measures converging to invariance in the Monge-Kantorovich sense. Then every ergodic invariant regular Borel probability measure $\mu$ on an arbitrary compact $G$-space is a point mass. \qed
\end{corollary}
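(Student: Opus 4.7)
The plan is to combine the theorem just proved with a trivial observation about what ergodicity means on a space where the action is trivial. By the preceding theorem, $\mu$ is supported on the closed set $F = \{x \in X \colon gx = x \text{ for all } g \in G\}$ of $G$-fixed points, so $\mu(F) = 1$. This reduces the corollary to showing that an ergodic $G$-invariant probability measure concentrated on $F$ is a point mass.

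First I would make the key observation that every Borel subset $A \subseteq F$ is $G$-invariant as a subset of $X$, since $G$ acts trivially on $F$. Therefore ergodicity of $\mu$ (every $G$-invariant Borel set has measure $0$ or $1$) implies that $\mu(A) \in \{0,1\}$ for every Borel $A \subseteq X$ (split as $A \cap F$ and $A \setminus F$, the latter being $\mu$-null). Thus $\mu$ is a $\{0,1\}$-valued regular Borel probability measure on the compact Hausdorff space $X$.

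Next I would argue that any such measure is a Dirac mass. The support $\mathrm{supp}(\mu)$ is a non-empty closed subset of $X$. Suppose it contained two distinct points $x \neq y$. Since $X$ is compact Hausdorff, pick disjoint open neighbourhoods $U \ni x$ and $V \ni y$. By the defining property of the support, $\mu(U) > 0$ and $\mu(V) > 0$; since $\mu$ is $\{0,1\}$-valued, both equal $1$, contradicting $U \cap V = \emptyset$. Hence $\mathrm{supp}(\mu) = \{x_0\}$ for some $x_0$, and $\mu = \delta_{x_0}$. Necessarily $x_0 \in F$, so $x_0$ is a $G$-fixed point.

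There is no real obstacle here; the work is packaged in the preceding theorem, and the corollary is essentially the tautological remark that an ergodic invariant measure for a \emph{trivial} action must be concentrated at a point. The only item requiring any care is the passage from ``$\{0,1\}$-valued on Borel sets'' to ``point mass'' in the non-metrizable setting, but this is a standard consequence of regularity and Hausdorff separation, as sketched above.
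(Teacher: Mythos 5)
Your proof is correct and follows exactly the route the paper intends: the corollary is stated with an immediate \qed, the whole content being the preceding theorem plus the observation that an ergodic invariant measure concentrated on the fixed-point set must be $\{0,1\}$-valued and hence (by regularity and Hausdorff separation) a Dirac mass. You have simply written out in full the routine details the paper leaves to the reader.
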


\begin{corollary}
Let $G$ be a Polish group admitting a L\'evy net of probability measures converging to invariance with regard to the Monge-Kantorovich distance. Then every ergodic weakly continuous near-action of $G$ on a standard Lebesgue probability space is whirly. \qed
\label{c:whirly}
\end{corollary}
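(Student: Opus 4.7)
The plan is immediate given what has been assembled: the corollary is a formal consequence of the immediately preceding theorem together with the equivalences in Theorem \ref{th:equivalent}. First, I observe that the hypothesis of the corollary is verbatim that of the preceding theorem, whose conclusion is that every invariant regular Borel probability measure on every compact $G$-space—in particular every metrizable one—is supported on the set of $G$-fixed points. This is precisely condition (2) in the list of Theorem \ref{th:equivalent}.

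Next, I invoke the implication (2)$\Rightarrow$(1) of Theorem \ref{th:equivalent}. Its conclusion is exactly the whirly property asserted by the corollary. Note that this implication was established in the proof of Theorem \ref{th:equivalent} by contraposition, using ergodic decomposition along a countable dense subgroup together with the spatial-realization result of Glasner--Tsirelson--Weiss (Proposition 3.3 of \cite{GTW}), and does not require amenability: amenability was only needed to bring condition (4) into the chain of equivalences. In any case, $G$ is amenable in our situation, since by Theorem \ref{th:fpc} it has the fixed point on compacta property.

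There is no serious obstacle. All the analytic content—the Lipschitz concentration estimate via the median, the convolution identity $\nu_\beta \ast \mu \Rightarrow \mu$ of Lemma \ref{l:convolution}, and the Monge--Kantorovich convergence to invariance—was already consumed by the proof of the preceding theorem. The present corollary is simply a restatement of that conclusion through the ergodic-theoretic/topological dictionary supplied by Theorem \ref{th:equivalent}, and the proof reduces to citing these two results in succession.
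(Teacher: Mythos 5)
Your argument is exactly the paper's: the corollary is stated with an immediate \verb|\qed|, justified by the sentence preceding it ("Taking into account Theorems \ref{th:equivalent} and \ref{th:fpc}..."), i.e., the preceding theorem supplies condition (2) of Theorem \ref{th:equivalent} and the implication (2)$\Rightarrow$(1) yields the whirly conclusion. Your added remarks --- that (2)$\Rightarrow$(1) needs no amenability, and that amenability holds anyway via Theorem \ref{th:fpc} and Remark \ref{r:implies} --- correctly account for why the paper also cites Theorem \ref{th:fpc}, so the proposal is complete and faithful to the intended proof.
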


\section{Gaussian near-actions}

Here is our main application.

\begin{theorem}
Let $H$ be an amenable second countable locally compact group, and let $(X,\mu)$ denote the standard Lebesgue measure space. 
Then the Polish group of maps $L^0(X,\mu;H)$ embeds into $\Aut(X,\mu)$ as a closed topological subgroup in such a way that the resulting action of $L^0(X,\mu;H)$ on $(X,\mu)$ is whirly. 
\label{th:main}
\end{theorem}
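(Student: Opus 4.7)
The plan is to invoke Corollary~\ref{c:whirly}. Example~\ref{ex:principal} already verifies its hypothesis for $L^0(X,\mu;H)$: the net $(\nu_{F,1/n!,n!})$ is L\'evy and converges to invariance in the Monge--Kantorovich sense. Hence every ergodic measure-preserving weakly continuous near-action of $L^0(X,\mu;H)$ on a standard Lebesgue probability space is automatically whirly, and the theorem reduces to constructing an ergodic near-action of $L^0(X,\mu;H)$ on $(X,\mu)$ that realises the group as a \emph{closed} topological subgroup of $\Aut(X,\mu)$.

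To build such an action I would apply the Gaussian functor. Pick a strongly continuous faithful unitary representation $\pi\colon H\to U(\mathcal H_\pi)$ without nonzero $H$-invariant vectors: when $H$ is non-compact, the left regular representation on $L^2(H)$ works; when $H$ is compact, $L^2(H)\ominus\C\cdot 1$ works, with faithfulness ensured by Peter--Weyl. Promote $\pi$ pointwise to a strongly continuous unitary representation
\[
\tilde\pi\colon L^0(X,\mu;H)\longrightarrow U\bigl(L^2(X,\mu;\mathcal H_\pi)\bigr),\qquad \bigl(\tilde\pi(f)\xi\bigr)(x)=\pi(f(x))\xi(x),
\]
continuity in the $L^0$-topology being a standard dominated-convergence check. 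A brief direct argument using indicator-valued test functions $f$ shows that every finite-dimensional $\tilde\pi$-invariant subspace must consist of $\mathcal H_\pi$-valued fields taking $H$-invariant values pointwise, which by our choice of $\pi$ forces the subspace to be trivial; hence $\tilde\pi$ is weakly mixing. Feeding $\tilde\pi$ into the Gaussian functor $\Gamma$ produces a measure-preserving near-action of $L^0(X,\mu;H)$ on a standard Gaussian probability space, which we identify with $(X,\mu)$. Ergodicity follows because the Koopman representation of the Gaussian action is the symmetric Fock space $\bigoplus_{n\ge 0}S^nL^2(X,\mu;\mathcal H_\pi)$, whose only invariants outside degree zero would have to arise from finite-dimensional invariant subspaces of $\tilde\pi$.

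It remains to verify that the induced homomorphism $\iota\colon L^0(X,\mu;H)\to\Aut(X,\mu)$ is a topological embedding onto a closed subgroup. Continuity follows from strong continuity of $\tilde\pi$ and functoriality of $\Gamma$, and injectivity from faithfulness of $\pi$; once closure is in hand, the open-mapping theorem for Polish groups automatically upgrades $\iota$ to a homeomorphism onto its image. The main obstacle is therefore the \emph{closed-image} property. The natural strategy is to identify $\iota\bigl(L^0(X,\mu;H)\bigr)$ as the pointwise stabiliser or centraliser inside $\Aut(X,\mu)$ of a canonical structure inherited from the Gaussian construction --- for instance, a commuting copy of $L^0(X,\mu;\T)$ arising from the scalar multiplication action on $L^2(X,\mu;\mathcal H_\pi)$ applied before the Gaussian functor --- and to observe that such centralisers are automatically weakly closed in $\Aut(X,\mu)$. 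Once closure is secured, Corollary~\ref{c:whirly} immediately delivers whirliness and the theorem follows.
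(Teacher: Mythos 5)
Your overall architecture matches the paper's: reduce to Corollary~\ref{c:whirly} via Example~\ref{ex:principal}, and produce the ergodic action by feeding a pointwise-promoted unitary representation of $L^0(X,\mu;H)$ into the Gaussian functor. But there is a genuine gap at the step you yourself flag as ``the main obstacle'': the closed-image property is never actually proved. The centraliser strategy you sketch (identifying the image as the centraliser of a commuting copy of $L^0(X,\mu;\T)$) is not carried out and is doubtful as stated --- such a centraliser would in general be much larger than $\iota(L^0(X,\mu;H))$ --- so as written the proof does not close. Moreover, your logical order (prove closedness first, then upgrade to an embedding by the open mapping theorem) is the reverse of what works cheaply. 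The paper's route is: prove \emph{directly} that $\tilde\pi$ is a \emph{topological} embedding into the unitary group --- this is the content of its Lemma~\ref{l:faithful}, and it requires an actual argument (testing against constant vectors $\bar\xi_i$ and translating ``$\norm{\pi_f(\bar\xi_i)-\bar\xi_i}<\delta$ for all $i$'' into a bound on $\me_1(f,e)$), not just faithfulness plus strong continuity, since a continuous injective homomorphism of Polish groups need not be an embedding. Once $\tilde\pi$ is a topological embedding and one composes with the topological embedding $O({\mathcal H}^{\colon 1\colon})\hookrightarrow \Aut(\R^\infty,\gamma^\infty)$, closedness comes for free: a Polish topological subgroup of a Polish group is automatically closed (it is $G_\delta$, hence comeager in its closure, hence equal to it). So the missing ingredient in your write-up is precisely the quantitative verification that $\tilde\pi$ is topologically faithful; supply that and drop the centraliser detour.

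A secondary, more minor point: your direct argument that $\tilde\pi$ has no finite-dimensional invariant subspaces (``indicator-valued test functions'') is only sketched and needs care when $H$ is compact, since $\pi$ itself then has many finite-dimensional subrepresentations. The paper avoids this entirely: $L^0(X,\mu;H)$ has the fixed point on compacta property (Theorem~\ref{th:fpc} together with Remark~\ref{r:implies}), hence is minimally almost periodic, so \emph{any} strongly continuous unitary representation of it without invariant vectors has no nonzero finite-dimensional subrepresentations. That argument is both shorter and uniform in $H$, and you already have all its ingredients on the table from Example~\ref{ex:principal}.
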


In the case $H=\T$, the result belongs to Glasner, Tsirelson and Weiss \cite{GTW}. 

\begin{example}
Let $H$ be a locally compact group without non-trivial compact subgroups. Then every monothetic subgroup of $H$ is infinite discrete by a well-known result. This property is clearly shared by $L^0(X,\mu;H)$, which group therefore also contains no non-trivial compact subgroups and is not a L\'evy group.  

Together with theorem \ref{th:main}, this observation answers in the negative a question by Glasner and Weiss, cf. problem 2 in \cite{GW}. For instance, the additive group of the topological vector space $L^0(X,\mu)$ embeds into $\Aut(X,\mu)$ as a closed topological subgroup in such a way that the tautological action on $(X,\mu)$ is whirly. 
\end{example}

In order to prove theorem \ref{th:main}, it is enough to embed $L^0(X,\mu;H)$ as a topological subgroup into $\Aut(X,\mu)$ in such a way that the resulting action on the measure space is ergodic. (A Polish topological subgroup is automatically closed in the ambient group.) This is achieved using the technique of Gaussian near-actions associated to unitary representations. Here is a brief summary of the relevant small part of the theory, for details we refer to \cite{Gl2}, sections 3.11-3.12 and \cite{kechris}, Appendix E, as well as numerous further references contained in those sources.

Let $(\R^{\infty},\gamma^{\infty})$ denote a countable infinite power of the real line equipped with the product of standard Gaussian measures with mean zero and variance one. This is a standard Lebesgue measure space.
The coordinate projections $p_i$, $i=1,2,\ldots$, form an orthonormal system in the real Hilbert space $L^2(\R^{\infty},\gamma^{\infty})$ and so span an isometrically isomorphic copy of $\ell^2_{\R}$, denoted ${\mathcal H}^{\colon 1\colon}$ and known as the {\em first Wiener chaos}. To every orthogonal operator $T$ on ${\mathcal H}^{\colon 1\colon}$ one can associate a unique measure-preserving automorphism $\tilde T$ of $(\R^{\infty},\gamma^{\infty})$ in such a way that the unitary operator corresponding to $\tilde T$ leaves ${\mathcal H}^{\colon 1\colon}$ invariant and coincides with $T$ on the latter space. The resulting monomorphism $O({\mathcal H}^{\colon 1\colon})\hookrightarrow \Aut(\R^{\infty},\gamma^{\infty})$ is an embedding of topological groups. 

If now $\pi$ is a strongly continuous orthogonal representation of a topological group $G$ in $\ell^2_{\R}$ (whose isomorphism with ${\mathcal H}^{\colon 1\colon}$ needs to be chosen), then $\pi$ gives rise to a weakly continuous near-action of $G$ on a standard Lebesgue measure space.
By considering a complexification of the first Wiener chaos, one constructs a similar extension for unitary representations instead of orthogonal ones (which is somewhat more convenient). The resulting near-action on $(\R^{\infty},\gamma^{\infty})$ is the {\em Gaussian near-action} corresponding to $\pi$.

\begin{theorem}[\cite{kechris}, p. 214; \cite{Gl2}, Theorem 3.59]
If a unitary representation of a countable group $\Gamma$ has no non-zero finite dimensional subrepresentations, then the corresponding Gaussian near-action of $G$ on $(\R^{\infty},\gamma^{\infty})$ is weakly mixing.
\label{th:mixing}
\end{theorem}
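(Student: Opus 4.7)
The plan is to reduce weak mixing of the Gaussian near-action to a spectral property of its Koopman representation and then exploit the Wiener chaos decomposition. By the standard Koopman--von Neumann criterion, a measure-preserving near-action is weakly mixing if and only if its Koopman representation on $L^2(X,\mu)\ominus\C\cdot\mathbf{1}$ admits no non-zero finite-dimensional $\Gamma$-invariant subspace. Hence the task reduces to showing that the Koopman representation of the Gaussian near-action on $L^2(\R^{\infty},\gamma^{\infty})\ominus\C$ has no finite-dimensional subrepresentation whenever $\pi$ does not.

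Next, I would invoke the Wiener--It\^o chaos decomposition
\[L^2(\R^{\infty},\gamma^{\infty})=\bigoplus_{n=0}^{\infty}\mathcal{H}^{:n:},\]
in which $\mathcal{H}^{:0:}=\C$ and $\mathcal{H}^{:n:}$ is canonically isomorphic, as a unitary $\Gamma$-module, to the symmetric tensor power $\mathrm{Sym}^{n}(\mathcal{H}^{:1:})$. Thus the Koopman representation of the Gaussian action on $L^2\ominus\C$ is unitarily equivalent to $\bigoplus_{n\geq 1}\mathrm{Sym}^{n}(\pi)$. If there were a non-zero finite-dimensional $\Gamma$-invariant subspace $V$ in this direct sum, the orthogonal projection of $V$ onto some summand $\mathcal{H}^{:n:}$ would be a non-zero $\Gamma$-equivariant linear map, whose image is a non-zero finite-dimensional $\Gamma$-invariant subspace of $\mathrm{Sym}^{n}(\pi)$, and hence of $\pi^{\otimes n}$.

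It thus remains to prove that if $\pi$ has no non-zero finite-dimensional subrepresentation, then neither does $\pi^{\otimes n}$ for any $n\geq 1$. I would use the classical criterion (essentially due to Dye) that, for a unitary representation of a countable group, the absence of non-zero finite-dimensional subrepresentations is equivalent to $\pi\otimes\sigma$ having no non-zero $\Gamma$-invariant vectors for \emph{every} unitary representation $\sigma$. The non-trivial direction rests on the observation that an invariant vector in $\pi\otimes\sigma$ corresponds to a non-zero equivariant Hilbert--Schmidt operator $T\colon\bar\sigma\to\pi$; the non-zero eigenspaces of the compact positive operator $T^{\ast}T$ are finite-dimensional and $\Gamma$-invariant, and their images under $T$ are finite-dimensional subrepresentations of $\pi$. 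Granted this criterion, a finite-dimensional $\Gamma$-invariant subspace $W\subseteq\pi^{\otimes n}$ would produce, through the identity operator on $W$, a non-zero $\Gamma$-invariant vector in
\[\pi^{\otimes n}\otimes\overline{\pi^{\otimes n}}\;=\;\pi\otimes\bigl(\pi^{\otimes (n-1)}\otimes\overline{\pi^{\otimes n}}\bigr),\]
contradicting the criterion applied to $\pi$ with $\sigma=\pi^{\otimes(n-1)}\otimes\overline{\pi^{\otimes n}}$.

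The main obstacle is precisely the classical equivalence invoked in the previous paragraph, which is where the hypothesis ``no finite-dimensional subrepresentations'' is converted into a statement robust under tensor operations. Once it is taken for granted, the reduction via Koopman--von Neumann, the splitting of the Koopman representation along the Wiener chaos tower, and the final tensor-power argument are entirely formal, and weak mixing (in particular, ergodicity) of the Gaussian near-action of $\Gamma$ on $(\R^{\infty},\gamma^{\infty})$ follows.
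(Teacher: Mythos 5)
The paper does not actually prove this statement: it is imported verbatim from \cite{kechris} (p.~214) and \cite{Gl2} (Theorem 3.59), so the only comparison available is with those sources --- and your argument is essentially the standard proof given there. The three ingredients you use (the Koopman--von Neumann criterion reducing weak mixing to the absence of finite-dimensional subrepresentations of the Koopman representation on $L^2\ominus\C$; the Wiener--It\^o chaos decomposition identifying that representation with a sum of symmetric tensor powers; and the Hilbert--Schmidt--operator lemma showing that ``no finite-dimensional subrepresentations'' is equivalent to ``$\pi\otimes\sigma$ has no invariant vectors for every $\sigma$,'' hence passes to tensor powers) are exactly the ones in the cited proofs, and each step is correctly executed. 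One small imprecision worth flagging: because the Gaussian construction for a \emph{unitary} $\pi$ goes through the complexification of the first Wiener chaos, the $n$-th chaos carries $\mathrm{Sym}^{n}(\pi\oplus\bar\pi)$ rather than $\mathrm{Sym}^{n}(\pi)$, so the subspaces you must rule out live in mixed tensor powers $\pi^{\otimes i}\otimes\bar\pi^{\otimes j}$ with $i+j\geq 1$. This is harmless --- $\bar\pi$ has a non-zero finite-dimensional subrepresentation exactly when $\pi$ does, so your tensor-product lemma applies verbatim --- but the reduction should be stated for these mixed powers to be literally correct.
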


\begin{remark}
By considering the closure of $\Gamma$ inside of the orthogonal group, we conclude that the same remains true of Polish groups and their strongly continuous unitary representations.
\end{remark}

Now it remains only to notice the following.

\begin{lemma} 
Let $H$ be a second countable locally compact group. Then the Polish group $L^0(X,\mu;H)$ admits a topologically faithful unitary representation, that is, can be embedded into $U(\ell^2)$ as a topological subgroup.
\label{l:faithful}
\end{lemma}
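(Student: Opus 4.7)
The plan is to bootstrap a topologically faithful unitary representation of $L^0(X,\mu;H)$ from one of $H$, via the pointwise multiplication action on $L^2(X,\mu;\mathcal{K})$.

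\textbf{Step 1: Start with a topologically faithful unitary representation of $H$.} Since $H$ is second countable and locally compact, the left regular representation $\lambda\colon H\to U(L^2(H))$ on the separable Hilbert space $\mathcal{K}=L^2(H)$ is strongly continuous, injective, and a homeomorphism onto its image in the strong operator topology. (This is standard; it follows from the continuity of translation in $L^2(H)$ and local compactness of $H$.) So we have $\pi=\lambda\colon H\hookrightarrow U(\mathcal{K})$ as a topological subgroup.

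\textbf{Step 2: Define the induced representation of $L^0(X,\mu;H)$.} On the separable Hilbert space $\mathcal{H}:=L^2(X,\mu;\mathcal{K})\cong\ell^2$, set
\[
\bigl(\tilde\pi(f)\xi\bigr)(x)=\pi(f(x))\xi(x),\qquad f\in L^0(X,\mu;H),\ \xi\in\mathcal{H}.
\]
Pointwise unitarity of $\pi(f(x))$ ensures each $\tilde\pi(f)$ is unitary, and $\tilde\pi$ is a homomorphism by pointwise composition.

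\textbf{Step 3: Strong continuity.} Suppose $f_n\to e$ in $L^0$, i.e. $\me_1(f_n,e)\to 0$. For any subsequence, a further subsequence converges to $e$ pointwise $\mu$-a.e., so by strong continuity of $\pi$, $\pi(f_n(x))\xi(x)\to\xi(x)$ in $\mathcal{K}$ for a.e.\ $x$, with $\|\pi(f_n(x))\xi(x)-\xi(x)\|\leq 2\|\xi(x)\|\in L^2$. Dominated convergence gives $\|\tilde\pi(f_n)\xi-\xi\|_{\mathcal{H}}\to 0$ along the subsequence, and the subsequence argument lifts this to the full sequence.

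\textbf{Step 4: Topological embedding (the main point).} Injectivity is immediate: if $\tilde\pi(f)=\tilde\pi(g)$, then $\pi(f(x))=\pi(g(x))$ for a.e.\ $x$, and faithfulness of $\pi$ forces $f=g$ in $L^0$. For the homeomorphism onto the image, suppose $\tilde\pi(f_n)\to I$ strongly; we must show $f_n\to e$ in measure. Applied to the constant vector $\xi_\eta(x)\equiv \eta$ (for $\eta\in\mathcal{K}$), strong convergence yields
\[
\int_X\|\pi(f_n(x))\eta-\eta\|_{\mathcal{K}}^{2}\,d\mu(x)\to 0,
\]
so the real-valued functions $x\mapsto\|\pi(f_n(x))\eta-\eta\|$ tend to $0$ in measure for every fixed $\eta$. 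Since $\pi$ is a topological embedding, a basic neighbourhood $V$ of $e$ in $H$ contains a set of the form $\{h\in H\colon \|\pi(h)\eta_i-\eta_i\|<\varepsilon,\ i=1,\dots,k\}$ for some finite family $\eta_1,\dots,\eta_k\in\mathcal{K}$ and some $\varepsilon>0$; hence
\[
\{x\colon f_n(x)\notin V\}\subseteq\bigcup_{i=1}^{k}\{x\colon\|\pi(f_n(x))\eta_i-\eta_i\|\geq\varepsilon\},
\]
whose measure tends to $0$. Thus $f_n\to e$ in $L^0(X,\mu;H)$, completing the proof.

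The only nonroutine step is Step 4: everything else is a straightforward combination of dominated convergence with the pointwise definition of $\tilde\pi$, whereas Step 4 relies essentially on the fact that the \emph{original} representation $\pi$ of $H$ was a topological embedding, not merely faithful, so that finitely many matrix coefficients control a neighbourhood of the identity.
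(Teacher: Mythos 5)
Your proposal is correct and follows essentially the same route as the paper: the same pointwise-multiplication representation on $L^2(X,\mu;\mathcal{K})$ built from the (topologically faithful) left regular representation of $H$, with the embedding property extracted from constant vectors and the fact that finitely many vectors of $\mathcal{K}$ control a neighbourhood of the identity in $H$. The only cosmetic difference is that you verify strong continuity by dominated convergence along subsequences where the paper tests on simple functions.
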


\begin{proof}
Every locally compact group admits a topologically faithful unitary representation (in fact, one can use the left regular representation, cf. Proposition 2 in \cite{PT}). Choose such a representation, $\rho$, of $H$ in a separable Hilbert space ${\mathcal H}$. Now define a representation $\pi$ of $L^0(X,\mu;H)$ in $L^2(X,\mu;{\mathcal H})$ as follows: for every $f\in L^0(X,\mu;H)$ and $\psi\in L^2(X,\mu;{\mathcal H})$, set
\[\pi_f(\psi)(x) =\rho_{f(x)}(\psi(x)),\]
where the equality is understood $\mu$-a.e. Every operator $\pi_f(\psi)$ is clearly unitary. Fixing a simple function $\psi\in L^2(X,\mu;{\mathcal H})$, one can see that the corresponding orbit map 
\[L^0(X,\mu;H)\ni f\mapsto \pi_f(\psi)\in L^2(X,\mu;{\mathcal H})\]
is continuous at identity, and consequently $\pi$
is a strongly continuous representation. 

Fix a right-invariant compatible pseudometric $d$ on $H$, and let $\e>0$ be arbitrary. Since $\rho$ is topologically faithful, there are $\xi_1,\ldots,\xi_n\in {\mathcal H}$ and a $\delta>0$ such that
\[\forall g\in G,~~\mbox{ if }\norm{\rho_g(\xi_i)-\xi_i}<\delta\mbox{ for all }i,\mbox{ then }d(g,e)<\e.\]
Let $\bar\xi_i$ be a constant function on $X$ taking value $\xi_i$. For any function $f\in L^0(X,\mu;H)$, if
\[\forall i=1,2,\ldots,n~~\norm{\pi_f(\bar\xi_i)-\bar\xi_i}<\delta,\]
then one must have for every $i$
\[\norm{\rho_{f(x)}(\xi_i)-\xi_i}<\delta\]
on a set of measure at least $1-\e$, meaning that 
\[\me_1(f,e)<n\e.\]
This means that $\pi$ is an embedding of topological groups, 
\[\pi\colon L^0(X,\mu;H)\hookrightarrow U(L^2(X,\mu;{\mathcal H})).\]
\end{proof}

\begin{proof}[Proof of theorem \ref{th:main}]
Choose a topologically faithful unitary representation $\pi$ of $L^0(X,\mu;H)$ in a separable Hilbert space as in Lemma \ref{l:faithful}. Of course we can assume that $\pi$ contains no invariant vectors. Since $L^0(X,\mu;H)$ has the fixed point on compacta property (Th. \ref{th:fpc} and Rem. \ref{r:implies}), it is minimally almost periodic, and so $\pi$ contains no finite-dimensional subrepresentations. For this reason, the Gaussian near-action of $L^0(X,\mu;H)$ associated to $\pi$ is weakly mixing (Thm. \ref{th:mixing}) and in particular ergodic. By Corollary \ref{c:whirly} and Ex. \ref{ex:principal}, this near-action is whirly. At the same time, $L^0(X,\mu;H)$ sits inside of $\Aut(\R^\infty,\gamma^\infty)$ as a closed topological subgroup.
\end{proof}

\section{Some problems}

\begin{enumerate}
\item The second part of problem 2 in \cite{GW} remains open: let $T\in\Aut(X,\mu)$ be such that the near-action of the closed subgroup $\Lambda(T)$ generated by $T$ on $(X,\mu)$ is whirly, is $\Lambda(T)$ a L\'evy group?
\item Suppose $G$ is a closed subgroup of $\Aut(X,\mu)$ whose tautological action on $(X,\mu)$ is whirly. Is $G$ necessarily amenable?
\item Here is a broader interpretation of the original question of Glasner and Weiss: is it possible to give necessary and sufficient conditions for a Polish group $G$ to be whirly amenable in terms of concentration of measure, e.g. the existence of a L\'evy net of measures which converges in some sense either to invariance or to idempotency?
(Cf. the following result (\cite{parthasarathy}, Th. 3.1): idempotent probability measures $\mu$ on Polish groups ($\mu\ast\mu=\mu$) are exactly Haar measures on compact subgroups.)
\item Say that a strongly continuous unitary representation of a Polish group is whirly if the associated Gaussian near-action is whirly. Is it possible to characterize such representations? 
\end{enumerate}



\begin{thebibliography}{99}


\bibitem{BK} H. Becker and A.S. Kechris,
\textit{The Descriptive Set Theory of Polish Group Actions,}
London Math. Soc. Lecture Note Series \textbf{232}, 
Cambridge University Press, 1996.

\bibitem{BP} C. Bessaga and A. Pe{\l}czy{\'n}ski,
{\em On spaces of measurable functions}, Studia Math. \textbf{44}
(1972), 597--615.
   
\bibitem{FS}
I. Farah and S. Solecki, {\em Extreme amenability of $L\sb 0$, a Ramsey theorem, and L\'evy groups,} J. Funct. Anal. \textbf{255} (2008), 471--493. 

\bibitem{GP1} T. Giordano and V. Pestov, 
\textit{Some extremely amenable groups.} C.R. Acad. Sci. Paris,
S\'er. I \textbf{334} (2002), No. 4, 273-278.

\bibitem{GP2} 
T. Giordano and V. Pestov, \textit{Some extremely amenable groups related to
operator algebras and ergodic theory,} J. Inst. Math. Jussieu \textbf{6} (2007), 279--315.

\bibitem{Gl1} S. Glasner,
\textit{On minimal actions of Polish groups,} Top. Appl. \textbf{85}
(1998), 119--125.

\bibitem{Gl2} E. Glasner, {\em Ergodic theory via joinings,}
Mathematical Surveys and Monographs \textbf{101}, American Mathematical Society, Providence, RI, 2003. 

\bibitem{GTW} E. Glasner, B. Tsirelson, and B. Weiss,
\textit{The automorphism group of the Gaussian measure cannot act pointwise,}
Israel J. Math. \textbf{148} (2005), 305--329. 


\bibitem{GW}  
E. Glasner, and B. Weiss, {\em Spatial and non-spatial actions of Polish groups,} Ergodic Theory Dynam. Systems \textbf{25} (2005), 1521--1538.

\bibitem{GraL}
E. Granirer and A.T. Lau, \textit{Invariant means on locally compact groups,}
Ill. J. Math. \textbf{15} (1971), 249--257.

\bibitem{Gr} M. Gromov, {\it Metric Structures for Riemannian and
Non-Riemannian Spaces,} Progress in Mathematics {\bf 152}, Birkhauser
Verlag, 1999.

\bibitem{GrM}
M. Gromov and V.D. Milman,
A topological application of the isoperimetric inequality,
Amer. J. Math. \textbf{105} (1983), 843--854.

\bibitem{HM} S. Hartman and J. Mycielski,  
{\it On the imbedding of topological groups into 
connected topological groups,}
Colloq. Math. {\bf 5} (1958), 167--169.

\bibitem{kechris} A.S. Kechris, {\em Global aspects of ergodic group actions,} preprint of a book, version of September 1, 2008, 261 pages, \\
{\tt http://www.math.caltech.edu/papers/main.pdf} (accessed on Feb. 28, 2009).

\bibitem{KPT} A.S. Kechris, V.G. Pestov and S. Todorcevic,
\textit{Fra\"\i ss\'e limits, Ramsey theory, and topological dynamics of
automorphism groups,} Geom. Funct. Anal. \textbf{15} (2005), 
106--189.


\bibitem{ledoux}
M. Ledoux, \textit{The concentration of measure phenomenon}. Math. Surveys and 
Monographs, \textbf{89}, Amer. Math. Soc., 2001.

\bibitem{megrelishvili01} M.G. Megrelishvili, \textit{Every semitopological semigroup compactification of the group $H\sb +[0,1]$ is trivial,} Semigroup Forum \textbf{63} (2001), 357--370

\bibitem{miculescu}
R. Miculescu, {\em Approximations by Lipschitz functions generated by extensions,}  Real Anal. Exchange \textbf{28} (2002), 33--41.

\bibitem{parthasarathy}
K.R. Parthasarathy, {\em Probability Measures on Metric Spaces,} AMS Chelsea Publishing, AMS, Rhode Island, 2005 reprint of the 1967 edition.

\bibitem{P98a} V.G. Pestov, 
\textit{On free actions, minimal flows, and a problem by Ellis,}
Trans. Amer. Math. Soc. \textbf{350} (1998), 4149-4165.

\bibitem{P05} V. Pestov, \textit{Ramsey--Milman phenomenon, 
Urysohn metric spaces,
and extremely amenable groups.} -- Israel Journal of Mathematics
\textbf{127} (2002), 317-358. \textit{Corrigendum,} ibid., \textbf{145} (2005), 375--379.

\bibitem{P06}
V. Pestov, \textit{Dynamics of Infinite-dimensional Groups: the Ramsey--Dvoretzky--Milman phenomenon,} 
University Lecture Series {\bf 40}, American Mathematical Society, Providence, RI,  2006.

\bibitem{PT}
S.Popa and M. Takesaki, 
{\em The topological structure of the unitary and automorphism groups of a factor,} Comm. Math. Phys. \textbf{155} (1993), 93--101.

\bibitem{villani}  C. Villani, {\em Optimal transport. Old and new,} Grundlehren der Mathematischen Wissenschaften \textbf{338}, Springer-Verlag, Berlin, 2009. 

\end{thebibliography}
\end{document}